\documentclass[12pt]{amsart}
\usepackage{amsfonts,amssymb,amscd,amsmath,enumerate,verbatim}
\usepackage[latin1]{inputenc}
\usepackage{amscd}
\usepackage{latexsym}
\usepackage{ytableau}

\usepackage[dvipdfmx]{graphicx}
\usepackage{mathptmx}

%
%
\def\NZQ{\mathbb}               

\def\ZZ{{\NZQ Z}}
\def\RR{{\NZQ R}}

\def\KK{{\NZQ K}}

%
%
\def\frk{\mathfrak}               

\def\Sf{{\frk S}}

\def\Phi{{\frk N}}
%
%
\def\ab{{\mathbf a}}

\def\xb{{\mathbf x}}

\def\Ib{{\mathbf I}}
\def\Pb{{\mathbf P}}


%
\def\opn#1#2{\def#1{\operatorname{#2}}} 
\opn\ini{in} \opn\sgn{sgn} \opn\Gr{Gr} \opn\Im{Im}
%
\opn\gr{gr}

\def\Ac{{\mathcal A}}

\def\id{{\textnormal{id}}}

%
%
%
%
%
\newtheorem{Theorem}{Theorem}[section]
\newtheorem{Lemma}[Theorem]{Lemma}

\newtheorem{Proposition}[Theorem]{Proposition}

\theoremstyle{definition}
\newtheorem{Remark}[Theorem]{Remark}

\newtheorem{Example}[Theorem]{Example}

\newtheorem{Definition}[Theorem]{Definition}

\newtheorem{Question}[Theorem]{Question}

%
%
\let\epsilon\varepsilon
\let\phi=\varphi
\let\kappa=\varkappa
%
%
\textwidth=15cm \textheight=22cm \topmargin=0.5cm
\oddsidemargin=0.5cm \evensidemargin=0.5cm \pagestyle{plain}

%
%
\opn\dis{dis}
\opn\height{height}
\opn\dist{dist}
\def\pnt{{\raise0.5mm\hbox{\large\bf.}}}

\opn\Lex{Lex}
\opn\conv{conv}

%


%

%
%

\begin{document}

\title{New kinds of block diagonal matching fields and toric degenerations of Grassmannians}

\author{Akihiro Higashitani and Nobukazu Kowaki}

\address{Akihiro Higashitani,
Department of Pure and Applied Mathematics,
Graduate School of Information Science and Technology,
Osaka University,
Suita, Osaka 565-0871, Japan}
\email{higashitani@ist.osaka-u.ac.jp}

\address{Nobukazu Kowaki
	Department of Pure and Applied Mathematics,
Graduate School of Information Science and Technology,
Osaka University,
Suita, Osaka 565-0871, Japan}
\email{u793177f@ecs.osaka-u.ac.jp}

\subjclass[2010]{
Primary: 13F65; 
Secondary; 13P10, 14M15. 
}
\keywords{Matching fields, Grassmannians, toric degenerations, SAGBI bases.}

\begin{abstract}
Block diagonal matching field has many previous works. In general, a coherent matching field induces a monomial order to Pl\"{u}cker algebra, and block diagonal matching fields are a kind of coherent matching fields. 
In the present paper, we introduce a new kind of block diagonal matching fields and study the problem when they give a SAGBI basis. 
As a corollary, we provide a new family of toric degenerations of Grassmannians by using SAGBI bases.
\end{abstract}

\maketitle

\section{Introduction}

A \textit{Grassmannian} $\mathrm{Gr}(r,n)$ is the set of all subspaces of a given dimension $r$ in $\KK^n$, where $\KK$ is a field. 

Toric degenerations provide the tool of polyhedral geometry to study algebraic geometry.
So, we study toric degenerations of Grassmannians. In particular we focus on toric degeneration by using SAGBI basis.

SAGBI basis was independently introduced by Kapur and Madlener \cite{kapur1989completion} and Robbiano and Sweedler \cite{robbiano1990subalgebra}. SAGBI means Subalgebra Analogue of Gr\"{o}bner Bases for Ideals.  SAGBI basis answers the subalgebra membership problem similar to Gr\"{o}bner bases to the ideal membership problem. What is important for us is that toric degenerations of Grassmannians by matching fields are characterized by that Pl\"{u}cker coordinates form SAGBI basis.

We need a monomial ordering to use SAGBI basis. To give this, we use matching fields. Given positive integers $r$ and $n$, a \textit{matching field} denoted by $\Lambda$ when there is no confusion, is a choice of a permutation $\Lambda(I) \in \mathfrak{S}_r$ for each $I \in \mathbf{I}_{r, n} = \{ I \subset [n] : |I| = r \}$, where $[n]=\{1,2,\ldots,n\}$ 
. 
A matching field gives rise to a toric degeneration if Pl\"{u}cker coordinates form SAGBI basis for the subalgebra they generate, with respect to a monomial ordering the matching field induces. See Section \ref{sec:matching_fields} for more details.
 Matching fields were born to study the Newton polytope of the product of all maximal minors of a matrix of indeterminates $X=(x_{ij})$, introduced by Sturmfels and Zelevinsky \cite{sturmfels1993maximal}. The most famous example of matching fields is the diagonal matching field. This chooses the diagonal term as the initial monomial for each minor. 
Namely, the diagonal matching field sends every $I \in \mathbf{I}_{r,n}$ to $\mathrm{id} \in \mathfrak{S}_r$.

Mohammadi and Shaw introduced a more generalized class of matching fields, block diagonal matching fields \cite{mohammadi2019toric}. The image of this matching field is $\{ \mathrm{id},\ (1\ 2) \}$. Block diagonal matching fields were studied in \cite{clarke2021combinatorial,clarke2021toric,clarke2024toric,higashitani2022quadratic}. Note that all these studies are  about $(\ab,2)$-block diagonal matching fields in the sense of Definition \ref{def:block}. 

It is a natural question whether $(\ab,\ell)$-block diagonal matching fields give rise to toric degenerations. For the precise definition of $(\ab,\ell)$-block diagonal matching field, see Section \ref{subsec:BDMF}. 

We prove the following main results of the present paper. 
First, we claim the case of $\ell \geq 4$. 
\begin{Theorem}[{See Theorems~\ref{thm:onlyif}, \ref{thm:if} and Lemma~\ref{cor:if}}]\label{thm:main}
Let $\ab=(a_1,\ldots,a_s) \in \ZZ_{>0}^s$ satisfying $\sum_{i=1}^s a_i=n$ and $a_s \geq 2$, and let $\ell \geq 4$. 
Consider the $(\ab,\ell)$-block diagonal matching field $\Lambda_{\ab,\ell}$. 
Then the generating set $\{\det(\xb_I) : I \in \Ib_{r,n}\}$ of the Pl\"ucker algebra $\Ac_{r,n}$ forms a SAGBI basis for $\Ac_{r,n}$ 
with respect to $\Lambda_{\ab,\ell}$ if 
\begin{itemize}
\item $a_1 \leq 3$ holds, and 
\item $r \geq \sum_{u=i}^s a_u$ is satisfied for any $i$ with $2 \leq i \leq s-1$ and $a_i \geq 3$. 
\end{itemize}

On the other hand, the generating set $\{\det(\xb_I) : I \in \Ib_{r,n}\}$ of the Pl\"ucker algebra $\Ac_{r,n}$ does not form a SAGBI basis for $\Ac_{r,n}$ 
with respect to $\Lambda_{\ab,\ell}$ if 
\begin{itemize}
\item $a_1 \geq 5$ holds, or
\item $r+2 \leq \sum_{u=i}^s a_u$ is satisfied for some $i$ with $2 \leq i \leq s-1$ and $a_i \geq 4$. 
\end{itemize}
In particular, when this is the case, $\Lambda_{\ab,\ell}$ gives rise to a toric degeneration of $\Gr(r,n)$. 
\end{Theorem}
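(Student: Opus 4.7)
The theorem splits into two independent halves, and I would prove them by the usual SAGBI toolkit. The \emph{only if} direction (no SAGBI basis) is proved by exhibiting, under each failure hypothesis, a specific pair of subsets $I,J \in \Ib_{r,n}$ for which the quadratic Pl\"ucker relation $p_I p_J - \sum \pm p_{I'} p_{J'}$ yields, after initialising with respect to $\Lambda_{\ab,\ell}$, a binomial whose leading monomial is strictly smaller than every leading monomial that can be reached by iterated subduction. The \emph{if} direction is proved via the SAGBI analogue of Buchberger's criterion: one shows that every binomial syzygy among the initial monomials $\ini_{\Lambda_{\ab,\ell}}(\det \xb_I)$ lifts to an expression in the algebra $\Ac_{r,n}$ whose subduction terminates at $0$. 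Since the initial monomial assigned by $\Lambda_{\ab,\ell}$ to each $\det \xb_I$ is a single squarefree monomial in the $x_{ij}$, the initial algebra is toric, and the criterion to check is purely combinatorial: every equality of monomials between $\prod \ini(\det \xb_{I_k})$ and $\prod \ini(\det \xb_{J_k})$ must come from a corresponding quadratic Pl\"ucker identity.

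\textbf{The \emph{only if} direction.} Here I would work locally in the two blocks that cause the failure. Under $a_1 \geq 5$, restricting to columns in the first block gives a configuration combinatorially equivalent to the well-studied $(5,\ell)$-matching field inside $\Gr(r,5)$, and one picks $I,J$ so that $\{I,J\}$ and any swap $\{I',J'\}$ produced by a Pl\"ucker relation disagree on the initial monomial at precisely one coordinate inside the block, creating a monomial in $\ini(\Ac_{r,n})$ not attainable from products of initial forms of generators of lower degree; the fact that $\ell \geq 4$ makes the internal shift of $\Lambda_{\ab,\ell}$ large enough to produce the mismatch. Under the second failure condition, the inequality $r+2 \leq \sum_{u=i}^s a_u$ allows one to freely choose two rows inside the $i$-th block together with enough rows strictly above it so that a length-two Pl\"ucker exchange between two subsets $I,J$ produces an initial monomial exchange whose lift to $\Ac_{r,n}$ is forced to involve a third initial monomial outside the span of the generators. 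In both cases, the explicit counterexample Pl\"ucker relation can be recorded and cited in the reference \texttt{thm:onlyif}.

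\textbf{The \emph{if} direction.} For this part I would proceed by induction on the degree of the binomial syzygy, reducing to quadratic syzygies by a standard argument that the initial algebra of a Pl\"ucker algebra is generated in degree two. For each quadratic monomial identity $\ini(\det \xb_I)\ini(\det \xb_J) = \ini(\det \xb_{I'})\ini(\det \xb_{J'})$, I would produce an explicit Pl\"ucker-type relation whose subduction terminates. The two structural hypotheses enter here in a complementary way: the bound $a_1 \leq 3$ controls the behaviour inside the leading block, where at most two internal swaps can appear in any initial monomial; the bound $r \geq \sum_{u=i}^s a_u$ ensures that inside any non-leading block of size $\geq 3$, enough rows from below are available so that every potentially problematic swap can be resolved by a Pl\"ucker exchange among rows lying strictly inside $[r]$. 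With these hypotheses, a case analysis on the relative position (inside the same block, across adjacent blocks, across non-adjacent blocks) of the two positions at which $I\cup J$ and $I'\cup J'$ differ shows that each quadratic obstruction is resolved by a single exchange step.

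\textbf{Main obstacle.} I expect the hardest point to be the combinatorial enumeration for the \emph{if} direction when blocks of size exactly $3$ interact with blocks of size $2$ at non-adjacent positions: in that regime the shift parameter $\ell \geq 4$ creates enough room for the initial monomial to wrap around two different blocks simultaneously, and one must verify that after the first Pl\"ucker exchange the intermediate binomial is again a quadratic syzygy of initial monomials to which the induction hypothesis applies. The remaining step, deducing the toric degeneration of $\Gr(r,n)$ from the SAGBI property, is then immediate from the general correspondence recalled in Section~\ref{sec:matching_fields}.
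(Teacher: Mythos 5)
Your proposal captures the broad shape of the proof (construct explicit counterexamples for the negative direction; verify a Buchberger-type condition for the positive direction), but there are two substantive gaps.

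\textbf{The reduction to quadratic syzygies is not standard and is essentially circular here.} For the \emph{if} direction you write that you would reduce to quadratic syzygies ``by a standard argument that the initial algebra of a Pl\"ucker algebra is generated in degree two.'' The initial algebra is generated in degree one by the $\ini(\det \xb_I)$; what you need is that its \emph{toric ideal} --- the matching field ideal $J_{\Lambda_{\ab,\ell}}$ --- is generated in degree two, so that the SAGBI Buchberger criterion only requires checking quadratic t\^ete-\`a-t\^etes. But quadratic generation of $J_\Lambda$ is not automatic: it is exactly the kind of nontrivial property of block diagonal matching fields studied in \cite{higashitani2022quadratic}, and it is not known a priori for the new $(\ab,\ell)$ family introduced here. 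In fact, by Theorem~\ref{thm:enough}, the SAGBI property is equivalent to $J_\Lambda = \ini_{w_M}(I_{r,n})$; since $I_{r,n}$ is quadratic this would \emph{force} $J_\Lambda$ to be an initial ideal of a quadratic ideal, but that does not give quadratic generation of $J_\Lambda$ itself without further work, and in any case one cannot use the conclusion to set up the criterion that proves it. The paper avoids this issue entirely: in Theorem~\ref{thm:if} it takes an arbitrary $f \in S$, reduces to the case where two monomials $\Pb^\beta, \Pb^\gamma$ of arbitrary degree have row-wise equal initial tableaux, and then shows the new initial monomial (obtained from a single vertical swap in rows $t-1$ and $t$) still factors column-by-column into initial forms $\ini_<(\psi(P_I))$. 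The vertical-swap analysis only involves one or two columns at a time, but the ambient tableaux may have many columns; no degree reduction is invoked or needed.

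\textbf{The counterexamples for the negative direction are not local to one block.} You suggest that under $a_1 \geq 5$ one can ``work locally in the two blocks that cause the failure'' or ``restrict to columns in the first block,'' but the paper's witnesses in Theorem~\ref{thm:onlyif} necessarily use columns from the first block together with columns from strictly later blocks --- for instance, in the case $a_1 = a \leq r$, the sets are $I = [r+3] \setminus \{a-1,a,a+2\}$, $J = [r+3] \setminus \{1,2,a+1\}$, which straddle the boundary between $I_1 = [a]$ and the subsequent blocks. The point of the construction is precisely that the $\ell$-th row of the matching field tableau sends the last element of $I \cap I_1$ to row $\ell$, so the obstruction arises from the interaction between the block of size $\geq 5$ and entries outside it; an analysis confined to $\Gr(r,5)$ inside the first block would not reproduce it. Similarly, the second family of counterexamples (for $a_i \geq 4$ with $r+2 \leq \sum_{u \geq i} a_u$) needs the leftmost column index $1$ from the very first block, together with the whole $i$-th block, to create the double occurrence that breaks the factorization.

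Finally, a small point of scope: the closing sentence about obtaining a toric degeneration applies only to the positive half of the theorem (the SAGBI case), whereas the ``in particular'' clause in the statement is ambiguously placed; your write-up inherits this without noting the asymmetry.
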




We see in detail the condition of $a_1,\cdots a_n$ that is not discussed in the theorem in Subsection \ref{open}.

\vspace{1mm}

A proof of Theorem~\ref{thm:main} consists as follows: 
\begin{itemize}
\item we prove that if $a_1 \geq 5$ or there is $i$ with $2 \leq i \leq s-1$ such that $a_i \geq 4$ and $r+2 \leq \sum_{t=i}^sa_t$, 
then $\{\det(\xb_I) : I \in \Ib_{r,n}\}$ does not form a SAGBI basis (Theorem~\ref{thm:onlyif}); 
\item we prove that if $a_1 \leq 3$ and $a_i \leq 2$ hold for any $i$ with $2 \leq i \leq s-1$, 
then $\{\det(\xb_I) : I \in \Ib_{r,n}\}$ forms a SAGBI basis (Theorem~\ref{thm:if}); 
\item by the same discussion as the proof of Theorem~\ref{thm:if}, we check that 
if $a_1 \leq 3$ holds and $r \geq \sum_{t=i}^s a_t$ is satisfied for any $i$ with $2 \leq i \leq s-1$ such that $a_i \geq 3$, 
then $\{\det(\xb_I) : I \in \Ib_{r,n}\}$ forms a SAGBI basis (Lemma~\ref{cor:if}); 
\end{itemize}

Next, we claim the case of $\ell=3$. 
\begin{Theorem}\label{thm:l=3}
For any $\ab=(a_1,\ldots,a_s) \in \ZZ_{>0}^s$ with $\sum_{i=1}^s a_i=n$, 
the generating set $\{\det(\xb_I) : I \in \Ib_{r,n}\}$ of the Pl\"ucker algebra $\Ac_{r,n}$ forms a SAGBI basis for $\Ac_{r,n}$ with respect to $\Lambda_{\ab,3}$. 
In particular, $\Lambda_{\ab,3}$ gives rise to a toric degeneration of $\Gr(r,n)$. 
\end{Theorem}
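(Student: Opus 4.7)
The plan is to follow the strategy developed in the proof of Theorem~\ref{thm:if}: establish the SAGBI basis property by verifying the appropriate subduction criterion on quadratic Plücker relations. Recall that for a matching field $\Lambda$ and the algebra $\Ac_{r,n}$, the set $\{\det(\xb_I) : I \in \Ib_{r,n}\}$ is a SAGBI basis provided every three-term Plücker relation $p_I p_J = \sum_{(I',J')} \pm p_{I'} p_{J'}$ has the property that $\ini_\Lambda(\det(\xb_I)\det(\xb_J))$ is realized by the initial monomial of exactly one summand on the right-hand side, with the appropriate sign cancellation. Our task is to verify this for every $I, J \in \Ib_{r,n}$ in the case $\Lambda = \Lambda_{\ab,3}$.

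First I would unpack the definition of $\Lambda_{\ab,3}$ and describe $\ini_{\Lambda_{\ab,3}}(\det(\xb_I))$ explicitly. A crucial structural feature is that for $\ell = 3$, the permutations $\Lambda_{\ab,3}(I) \in \Sf_r$ are built from 3-cycles (when a block contributes at least three indices to $I$) together with fixed points and at most one transposition per small block, a far more rigid cycle structure than the permutations arising for $\ell \geq 4$. The bulk of the proof would then be a case analysis based on how $I$ and $J$ intersect the blocks determined by $\ab$. For each pattern, we compute $\ini_{\Lambda_{\ab,3}}(\det(\xb_I)\det(\xb_J))$ and locate the matching initial monomial among the exchange terms $\det(\xb_{I'})\det(\xb_{J'})$ in the Plücker relation. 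The computations would parallel those in the proof of Theorem~\ref{thm:if}, but must be carried out without the restrictions on $a_1$ or on the middle $a_i$'s imposed there.

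The main obstacle is to show that the obstructions from Theorem~\ref{thm:onlyif} — which required $a_1 \geq 5$, or some $a_i \geq 4$ with $r+2 \leq \sum_{t=i}^s a_t$, combined with $\ell \geq 4$ — cannot arise when $\ell = 3$. The obstructing configurations exhibited in that theorem relied on the presence of cycles of length $\geq 4$ in $\Lambda_{\ab,\ell}(I)$, which simply cannot occur when $\ell = 3$. I expect the heart of the argument is to make this rigidity precise and verify, by a careful combinatorial check of exchange relations, that the leading monomials always match up regardless of the composition $\ab$. The concluding sentence, asserting that $\Lambda_{\ab,3}$ gives rise to a toric degeneration of $\Gr(r,n)$, then follows from the standard principle that a matching field for which the Plücker coordinates form a SAGBI basis yields a toric degeneration.
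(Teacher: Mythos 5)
Your high-level plan---adapt the tableau/subduction argument of Theorem~\ref{thm:if} to $\ell = 3$---is the right one and matches what the paper does, but there is a concrete factual error that would derail the argument, and the key observation that makes the adaptation work for arbitrary $\ab$ is missing. The description of the permutations $\Lambda_{\ab,3}(I)$ is incorrect: by Proposition~\ref{prop:bdmf}, $\Lambda_{\ab,3}(I)$ equals $(1\ 2\ 3)$ if $I$ is of type $1$, $(2\ 3)$ if $I$ is of type $2$, and $\mathrm{id}$ otherwise, where ``type'' is $|I \cap I_{q(I)}|$ for the first block $I_{q(I)}$ meeting $I$. There is thus at most one non-trivial cycle and its shape depends only on the type; it is not built from ``3-cycles together with fixed points and at most one transposition per small block,'' and in particular a block contributing at least three indices gives $\mathrm{id}$, not a 3-cycle---the opposite of what you write. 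Carrying that picture into a case analysis would produce incorrect initial monomials.

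The structural fact you actually need---and what the paper uses---is exactly that for $\ell = 3$, every $I$ of type $\geq 3$ is sent to $\mathrm{id}$. This replaces the hypotheses $a_1 \leq 3$, $a_i \leq 2$ of Theorem~\ref{thm:if}, which were there only to guarantee that almost all $I$ are of type $1$ or $2$; for $\ell = 3$ the higher types behave like the diagonal case automatically, so no hypothesis on $\ab$ is needed. The paper then notes that when the vertical swap occurs at row $t \notin \{2, \ell, \ell+1\} = \{2,3,4\}$ the analysis reduces to the diagonal matching field, reruns the second and third steps of Theorem~\ref{thm:if} verbatim, and modifies only the fourth step ($t = \ell+1 = 4$) to cover the additional tableaux patterns now possible with $a_i \geq 3$. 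Your appeal to ``obstructions from Theorem~\ref{thm:onlyif} cannot arise'' is not a substitute: that theorem assumed $\ell \geq 4$, so its non-applicability says nothing about whether $\ell = 3$ provides a SAGBI basis---one still has to run the positive argument and check the new tableau cases, which your sketch does not do.
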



The paper is organized as follows: 
Section $2.1$ fixes notations for the Pl\"{u}cker algebras and SAGBI basis. In Section $2.2$ and $2.3$, we recall matching fields and the associated tableaux. In Section $3$, we introduce the block diagonal matching fields and the tools to prove our main theorems. In the last two sections, our main theorem is proved with many examples. 

\section*{Acknowledgements}
The first named author is partially supported by JSPS KAKENHI Grant Number JP24K00521 and JP21KK0043.

\medskip


\section{Preliminaries}

\subsection{Pl\"ucker algebra and SAGBI basis}
Given integers $r$ and $n$ with $1 < r < n$, let $\Ib_{r,n}$ be the collection of all $r$-subsets of $[n]=\{1,2,\ldots,n\}$. 
We often use, e.g., $123 \in \Ib_{3,n}$ to notate $\{1,2,3\}$ for short. 

Let $S = \KK[P_I : I \in  \Ib_{r,n}]$ be the polynomial ring with $\binom{n}{r}$ variables. 
Let $\xb=(x_{ij})_{1 \leq i \leq r, 1 \leq j \leq n}$ be the $r \times n$ matrix of variables 
and let $R=\KK[\xb]$ be the polynomial ring with $rn$ variables. 
The {\em Pl\"ucker ideal} $I_{r,n}$ is defined by the kernel of the ring homomorphism 
$$\psi:S \rightarrow R, \;\; P_I \mapsto \det(\xb_I),$$
where $\xb_I$ denotes the $r \times r$ submatrix of $\xb$ whose columns are indexed by $I$. 
The {\em Pl\"ucker algebra} $\Ac_{r,n}$ is the image $\Im(\psi)$ of this map, which is isomorphic to $S/I_{r,n}$. 
The Pl\"ucker algebra $\Ac_{r,n}$ is well-known to be a homogeneous coordinate ring of the Pl\"ucker embedding of the Grassmannians $\Gr(r,n)$.

\begin{Definition}[SAGBI bases for the Pl\"ucker algebra]
We say that the generating set $\{\det(\xb_I) : I \in \Ib_{r,n}\} \subset R$ of the Pl\"ucker algebra $\Ac_{r,n}$ 
is a {\em SAGBI basis with respect to a weight matrix $M \in \RR^{r \times n}$} if $\ini_M(\Ac_{r,n})=\KK[\ini_M(\det(\xb_I)) : I \in \Ib_{r,n}]$, 
where $\ini_M(\Ac_{r,n})=K[\ini_M(f) : f \in \Ac_{r,n}]$. 
\end{Definition}

\subsection{Matching fields}\label{sec:matching_fields}

Let ${\mathfrak S}_r$ denote the symmetric group on $[r]$. 
An $r \times n$ {\em matching field} is a map $$\Lambda : \Ib_{r,n} \rightarrow {\mathfrak S}_r.$$ 
For $I=\{i_1,\ldots,i_r\} \in \Ib_{r,n}$ with $1 \leq i_1 < \cdots < i_r \leq n$ and $\sigma \in \Sf_r$, we associate the monomial of $R$ 
$$\xb_{\sigma(I)}:=x_{\sigma(1) i_1} \cdots x_{\sigma(r) i_r}.$$
Given a matching field $\Lambda$, in the case $\sigma=\Lambda(I)$, we write $\xb_{\Lambda(I)}$ instead of $\xb_{\sigma(I)}$. 
We define a ring homomorphism $$\psi_\Lambda : S \rightarrow R, \;\;\psi_\Lambda(P_I) = \sgn(\Lambda(I))\xb_{\Lambda(I)},$$ 
where $\sgn(\sigma)$ denotes the signature of $\sigma \in {\mathfrak S}_r$. 
Then the {\em matching field ideal} $J_\Lambda$ of $\Lambda$ is the kernel of $\psi_\Lambda$.

\begin{Definition}[{Coherent matching fields (\cite[Section 1]{sturmfels1993maximal}, \cite[Definition 2.8]{mohammadi2019toric}})]
A matching field $\Lambda$ is said to be {\em coherent} if there exists an $r \times n$ matrix $M \in \RR^{r \times n}$ with its entries in $\RR$ 
such that for every $I \in \Ib_{r,n}$ the initial form $\ini_M( \det(\xb_I) )$ of $\det(\xb_I)$ with respect to $M$, 
which is the sum of all terms of $\det(\xb_I)$ having the lowest weights, is equal to $\psi_\Lambda(P_I)$. 
In this case, we call $\Lambda$ a coherent matching field {\em induced by $M$}. 
For the matching field ideals, we use the notation $J_M$ instead of $J_\Lambda$ if $\Lambda$ is a coherent matching field induced by $M$. 
\end{Definition}
In the original definition \cite[Section 1]{sturmfels1993maximal} of coherent matching fields, 
the initial form is set to be the sum of all terms having the {\em highest} weights, 
but we usually employ the definition with the sum of all terms having the {\em lowest} weights 
when it is related to the context of tropical geometry, following the convention of our main reference \cite{mohammadi2019toric}. 

\begin{Theorem}[{\cite[Theorem 11.4]{sturmfels1996grobner}}]\label{thm:enough}
The generating set $\{\det(\xb_I) : I \in \Ib_{r,n}\}$ of the Pl\"ucker algebra $\Ac_{r,n}$ 
is a SAGBI basis with respect to a weight matrix $M \in \RR^{r \times n}$ 
if and only if $J_\Lambda=\ini_{w_M}(I_{r,n})$ holds, where $w_M$ is the weight vector on $S$ induced by $M$ 
and $\Lambda$ is the coherent matching field induced by $M$. 
\end{Theorem}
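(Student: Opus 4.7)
The plan is to prove the equivalence by Hilbert-series comparison of three $\NN$-graded algebras built from $\Ac_{r,n}$: the algebra $S/I_{r,n} \cong \Ac_{r,n}$ itself, its weight initial algebra $\ini_M(\Ac_{r,n})$, and the quotient $S/\ini_{w_M}(I_{r,n})$. Since $\Lambda$ is coherent and induced by $M$, one has $\psi_\Lambda(P_I) = \sgn(\Lambda(I))\xb_{\Lambda(I)} = \ini_M(\det(\xb_I))$, so $S/J_\Lambda \cong \KK[\ini_M(\det(\xb_I)) : I \in \Ib_{r,n}] \subseteq \ini_M(\Ac_{r,n})$; this containment is always present and the SAGBI property is precisely its reverse.

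Next I would record the standard fact that, because $I_{r,n}$ and $J_\Lambda$ are homogeneous in the standard grading on $S$, the Hilbert series of $S/I_{r,n}$, $S/\ini_{w_M}(I_{r,n})$ and $\ini_M(\Ac_{r,n})$ all coincide (weight degenerations are flat in the graded setting, so initial operations preserve Hilbert functions). Then I would prove the always-true containment $\ini_{w_M}(I_{r,n}) \subseteq J_\Lambda$: given $f \in I_{r,n}$, decompose $f = \sum_{w} f_w$ according to $w_M$-weight, and use coherence to check that $\psi(f_w)$ has all $M$-weights $\geq w$ with the weight-$w$ part equal to $\psi_\Lambda(f_w)$. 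The vanishing $\psi(f) = 0$ then forces $\psi_\Lambda(\ini_{w_M}(f))$ to vanish, so $\ini_{w_M}(f) \in J_\Lambda$.

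With these two ingredients, both directions reduce to a sandwich argument. For the $(\Leftarrow)$ direction, the assumption $J_\Lambda = \ini_{w_M}(I_{r,n})$ gives $S/J_\Lambda$ the same Hilbert series as $\ini_M(\Ac_{r,n})$; the inclusion $\KK[\ini_M(\det(\xb_I))] \subseteq \ini_M(\Ac_{r,n})$ in the same grading then must be an equality, which is the SAGBI condition. For the $(\Rightarrow)$ direction, the SAGBI equality yields $S/J_\Lambda \cong \ini_M(\Ac_{r,n})$, whose Hilbert series matches that of $S/\ini_{w_M}(I_{r,n})$; together with the inclusion $\ini_{w_M}(I_{r,n}) \subseteq J_\Lambda$ established above, this forces equality of ideals.

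The main technical obstacle is the one-sided containment $\ini_{w_M}(I_{r,n}) \subseteq J_\Lambda$: it requires that the $w_M$-weight on $S$ and the $M$-weight on $R$ are compatible under $\psi$, which in turn relies on the fact (implicit in coherence) that $w_M(P_I)$ is taken as the minimum $M$-weight among the monomials of $\det(\xb_I)$ and that distinct-weight components in $S$ cannot collide into a lower weight component in $R$ under $\psi$. Once this compatibility is pinned down, everything else is Hilbert-series bookkeeping.
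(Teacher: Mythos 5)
The paper does not prove this statement; it is cited verbatim from Sturmfels' book (Theorem 11.4 of \cite{sturmfels1996grobner}), so there is no in-paper proof to compare against. Your Hilbert-series argument is correct and is in fact the standard proof of this result: the three ingredients you isolate --- the identification $S/J_\Lambda \cong \KK[\ini_M(\det(\xb_I))]$ coming from coherence, the preservation of Hilbert functions under both passage to the weight-initial ideal of $I_{r,n}$ and to the weight-initial algebra of $\Ac_{r,n}$, and the universal containment $\ini_{w_M}(I_{r,n}) \subseteq J_\Lambda$ obtained by projecting $\psi(f)=0$ onto its lowest $M$-weight component --- are exactly the ones the sandwich argument needs, and your handling of the key compatibility (that $w_M$ on $S$ is defined as the pullback of the minimal $M$-weight on $R$, so initial forms multiply to initial forms and no weight collapse can occur) closes the only potential gap. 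One small point worth making explicit when you invoke the sandwich: the inclusion $\KK[\ini_M(\det(\xb_I))] \subseteq \ini_M(\Ac_{r,n})$ is degreewise with respect to the standard grading (both sides are generated in $R$-degree $r$), which is what licenses the conclusion that equal Hilbert series forces equality; as written this is implicit but correct.
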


\begin{Remark}\label{rem:permute}
Let $M \in \RR^{r \times n}$ be an $r \times n$ matrix which induces a coherent matching field $\Lambda$. 
Consider the other matrix $M'$ obtained by permuting the columns of $M$ and the associated matching field $\Lambda'$. 
Since any permutation of the column of the matrix corresponds to the permutation of the variables $x_{ij}$ of $R=\KK[\xb]$, 
we can see that the generating set gives rise to a SAGBI basis with respect to $M$ if and only if generating set gives rise to a SAGBI basis with respect to $M'$.
\end{Remark}

\begin{Remark}\label{rem:assume}
In the case of $r=2$, we see that any matching field $\Lambda$ gives rise to a SAGBI basis. 
In fact, without loss of generality, we can assume that the first row of the associated matrix of any coherent matching field is all $0$, 
so the weight with respect to $\Lambda$ is determined only by the second row. 
By permuting the entries in the second row in the descreasing order, we obtain the diagonal matching field, which gives rise to a SAGBI basis. 
Hence, $\Lambda$ also gives rise to a SAGBI basis (see Remark~\ref{rem:permute}). 
Namely, we do not need to discuss the case $r=2$.  

Moreover, it is well known that $\Ac_{r,n}$ is isomorphic to $\Ac_{n-r,n}$. 
Hence, we do not need to discuss the case $r=n-2$, either. 

Therefore, in what follows, we may assume that $3 \leq r \leq n-3$. 
\end{Remark}

\subsection{Matching field tableaux}

For an $r \times n$ coherent matching field $\Lambda$, we provide the description of (the product of) the monomials appearing in $\psi(P_I)$ in terms of tableaux. 
This was originally introduced in \cite{MR4108332} (or essentially in \cite{mohammadi2019toric}). 

Given $I=\{i_1,\ldots,i_r\},J=\{j_1,\ldots,j_r\} \in \Ib_{r,n}$ with $1 \leq i_1 < \cdots < i_r \leq n$, $1 \leq j_1 < \cdots < j_r \leq n$ and $\sigma, \tau \in \Sf_r$, respectively, 
let us associate $\xb_{\sigma(I)}$ and $\xb_{\sigma(I)}\xb_{\tau(J)}$ with the following tableaux $T_{\sigma(I)}$ and $T_{\sigma(I),\tau(J)}$: 
$$\ytableausetup{boxsize=2em}
T_{\sigma(I)}=\begin{ytableau} i_{\sigma(1)} \\ i_{\sigma(2)} \\ \vdots \\ i_{\sigma(r)}\end{ytableau}\quad\text{and}\quad
T_{\sigma(I),\tau(J)}=\begin{ytableau} i_{\sigma(1)} &j_{\tau(1)} \\ i_{\sigma(2)} &j_{\tau(2)} \\ \vdots &\vdots \\ i_{\sigma(r)} &j_{\tau(r)} \end{ytableau}$$ 
The product of more than two monomials is similar. 
We use the notation $T_I$ if $\sigma=\Lambda(I)$ for a given coherent matching field. (The case of more than one monomial is similar.) 

Let $I_1,\ldots,I_m,J_1,\ldots,J_m \in \Ib_{r,n}$ and $\sigma_1,\ldots,\sigma_m,\tau_1,\ldots,\tau_m \in \Sf_r$. 
Then we see that two tableaux $T_{\sigma_1(I_1),\ldots,\sigma_m(I_m)}$ and $T_{\tau_1(J_1),\ldots,\tau_m(J_m)}$ are row-wise equal if and only if 
the corresponding two monomials $\xb_{\sigma_1(I_1)} \cdots \xb_{\sigma_m(I_m)}$ and $\xb_{\tau_1(J_1)} \cdots \xb_{\tau_m(J_m)}$ 
appearing in $\psi(P_{I_1} \cdots P_{I_m})$ and $\psi(P_{J_1} \cdots P_{J_m})$ coincide. 
Namely, this monomial disappears from the polynomial $\psi(P_{I_1} \cdots P_{I_m}-P_{J_1} \cdots P_{J_m})$. 

\begin{Example}
Let $\Lambda$ be a $4 \times 8$ matching field and let $\Lambda(2357)=(1 \ 2 \ 3)$. 
Then $\xb_{\Lambda(2357)}$ corresponds to $T_{2357}=\ytableausetup{boxsize=1em}\begin{ytableau} 3 \\ 5 \\ 2 \\ 7\end{ytableau}$. 
Moreover, if $\Lambda(3456)=\mathrm{id}$, then $T_{2357,3456}=\begin{ytableau} 3 &3 \\ 5 &4 \\ 2 &5 \\ 7 &6\end{ytableau}$. 
\end{Example}

\medskip

\section{$(\ab,\ell)$-block diagonal matching fields}\label{sec:prepare}

\subsection{Definition of $(\ab,\ell)$-block diagonal matching fields}\label{subsec:BDMF}
Inspired by \cite[Definition 4.1]{mohammadi2019toric}, we introduce a new kind of coherent matching fields. 
The main object of the present paper is the following: 
\begin{Definition}[$(\ab,\ell)$-block diagonal matching fields]\label{def:block}
Let $\ab = (a_1,\dots, a_s) \in \ZZ_{>0}^s$ such that $\sum_{i=1}^s a_i = n$ and let $\ell$ be an integer with $2 \leq \ell \leq r$. For $k=1,2,\dots , s$, let 
$$I_k=\{\alpha_{k-1}+1 ,\alpha_{k-1}+ 2,  \ldots, \alpha_k \}=[\alpha_k] \setminus [\alpha_{k-1}],$$
where $\alpha_0=0$ and $\alpha_k= \sum_{i=1}^k a_i$. Note that $\alpha_s =n$.
Then the $(\ab,\ell)$-{\em block diagonal matching field} $\Lambda_{\ab,\ell}$ is a coherent matching field induced by the following matrix: 
\begin{align*}
&M_{\ab,\ell}=\\
&\left(
\begin{array}{ccc|ccc|c|ccc}
0         &  \cdots & 0                                       &0              & \cdots  & 0 &\cdots   &0 &\cdots &0\\
n         &  \cdots &n-\alpha_1+1                             &n-\alpha_1 &  \cdots &n-\alpha_2+1   &\cdots    &n-\alpha_{s-1} &\cdots & 1 \\
 \vdots &            &\vdots                              &\vdots &           &\vdots   &        &\vdots & & \vdots \\
\alpha_1\beta^{\ell-2} &\cdots &\beta^{\ell-2}     &\alpha_2\beta^{\ell-2} &\cdots&(\alpha_1+1)\beta^{\ell-2}  &\cdots 
&\alpha_s\beta^{\ell-2} &\cdots &(\alpha_{s-1}+1)\beta^{\ell-2} \\
 \vdots &            &\vdots                              &\vdots &           &\vdots   &        &\vdots & & \vdots \\
n\beta^{r-2}      &  \cdots &     &(n-\alpha_1)\beta^{r-2} &  \cdots & &\cdots    & &\cdots & \beta^{r-2} 
\end{array} \right),
\end{align*}
where $\beta \gg 0$. 
Note that the case $\ell=2$ is nothing but the $s$-block diagonal matching field defined in \cite[Definition 4.1]{mohammadi2019toric}. 
(The matrix $M_{\ab,2}$ is equal to the one appearing in \cite[Definition 2.2]{higashitani2022quadratic}.) 
\end{Definition}

\begin{Question}
Does every $(\ab,\ell)$-block diagonal matching field give rise to a toric degeneration of $\Gr(r,n)$? 
\end{Question}
Note that this question for $\ell=2$ was mentioned in \cite[Question 5.7]{higashitani2022quadratic}, and partially solved in \cite[Theroem 3]{clarke2024toric}.
The goal of the present paper is to give an almost complete answer of this question for $\ell \geq 3$. 

\subsection{Properties on $\Lambda_{\ab,\ell}$}
Given $\ab=(a_1,\ldots,a_s) \in \ZZ_{>0}^s$ with $\sum_{i=1}^s a_i=n$ and $\ell \geq 2$, 
consider the $(\ab,\ell)$-block diagonal matching field $\Lambda_{\ab,\ell}$. 
For $I \in \Ib_{r,n}$, we say that $I$ is \textit{of type $b$} if $|I \cap I_q|=b$, where 
\[q=q(I) := \min \{ t : I \cap I_t \neq \emptyset\}.\] 

We observe the following: 
\begin{Proposition}\label{prop:bdmf}
Let $I=\{i_1,\ldots,i_r\} \in \Ib_{r,n}$, where $1 \leq i_1 < \cdots < i_r \leq n$. 
Assume that $I \cap I_{q(I)}=\{i_1,\ldots,i_b\}$. 
Then $\Lambda_{\ab,\ell}$ sends $I$ as follows: 
\begin{align}\label{eq:BDMF}\Lambda_{\ab,\ell} (I) = 
\begin{cases}
(1 \ 2 \ \cdots \ \ell),\; &\mbox{if $I$ is of type }1, \\
(2 \ \cdots \ \ell),\; &\mbox{if $I$ is of type }2, \\
\quad\vdots \\
\quad\vdots \\
(\ell-1 \ \ell),\; &\mbox{if $I$ is of type } \ell-1, \\
 {\rm id}, &\mbox{otherwise}. 
\end{cases}
\end{align}
In particular, the $\ell$-th entry of $T_I$ is equal to $i_b$ and the other entries of $T_I$ are arrayed in the increasing order. 
\end{Proposition}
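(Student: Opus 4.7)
The plan is to compute $\Lambda_{\ab,\ell}(I)$ directly from its definition by identifying, among the $r!$ monomials $\sgn(\sigma)\xb_{\sigma(I)}$ appearing in $\det(\xb_I)$, the one of minimum weight with respect to $M_{\ab,\ell}$. Because $\beta \gg 0$, weights are compared lexicographically by decreasing powers of $\beta$, so the optimization can be carried out greedily row by row, starting from the row of highest $\beta$-power and descending.

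First I would treat the ``globally decreasing'' rows $\ell+1,\ldots,r$: the row-$k$ entry is a strictly decreasing function of the column index (times $\beta^{k-2}$), so its minimum over the columns of $I$ is achieved at the largest available column, and a downward induction on $k$ forces $\sigma(k)=k$ for every $k>\ell$, consuming the columns $i_{\ell+1},\ldots,i_r$ and leaving rows $1,\ldots,\ell$ to be matched with $\{i_1,\ldots,i_\ell\}$. The delicate second step analyses the block-structured row $\ell$: within each block $I_j$ the entries of row $\ell$ decrease as the column index grows, but crucially the minimum value of row $\ell$ in block $j+1$ (namely $\alpha_j+1$) strictly exceeds its maximum in block $j$ (namely $\alpha_j$). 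Hence the minimum of row $\ell$ over $\{i_1,\ldots,i_\ell\}$ is attained at the last column lying in the earliest block $I_{q(I)}$ that $I$ meets, which equals $i_b$ when $b\le\ell-1$ and $i_\ell$ when $b\ge\ell$. Cascading the same greedy argument through rows $\ell-1,\ldots,2$ and finally row $1$ then shows that, in the type $b<\ell$ case, rows $b+1,\ldots,\ell$ must occupy columns $i_b,i_{b+1},\ldots,i_{\ell-1}$ in that order while row $b$ is pushed to the leftover column $i_\ell$; this is exactly the cycle $(b\ b+1\ \cdots\ \ell)$. In the type $b\ge\ell$ case no displacement occurs and one recovers $\sigma=\id$.

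The tableau statement is then a formal rewriting: for $\sigma=(b\ b+1\ \cdots\ \ell)$ one has $\sigma(\ell)=b$, so the $\ell$-th entry of $T_I$ is $i_{\sigma(\ell)}=i_b$, while for $k\neq\ell$ the map $\sigma$ either fixes $k$ or shifts it by $+1$, which is exactly what is needed for the tableau entries $i_{\sigma(k)}$ to enumerate $\{i_1,\ldots,i_r\}\setminus\{i_b\}$ in increasing order. I expect the main technical obstacle to be the block-wise comparison at row $\ell$: alternative cross-block swaps involving columns drawn from several later blocks at once must be ruled out, and this requires combining the $\beta$-lexicographic order with the precise magnitudes of the entries in the rows of lower $\beta$-power so that the row-$\ell$ choice propagates consistently down to row $1$.
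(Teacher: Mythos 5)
Your approach — a greedy, row-by-row minimization descending through the $\beta$-powers of $M_{\ab,\ell}$, with the block structure of the $\ell$-th row supplying the one non-trivial step — is exactly the argument the paper uses; the paper's proof is just two sentences and leaves the cascade implicit, whereas you spell it out. Your first step (rows $\ell+1,\dots,r$ get fixed) and your analysis of the block row $\ell$ (the minimum over $\{i_1,\dots,i_\ell\}$ sits at $i_b$ when $b<\ell$, at $i_\ell$ when $b\ge\ell$, because consecutive blocks have disjoint value ranges) are both correct.

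There is, however, an internal inconsistency in the way you describe the cascade through rows $\ell-1,\dots,1$. You assert that rows $b+1,\dots,\ell$ get paired with the columns $i_b,i_{b+1},\dots,i_{\ell-1}$ and that row $b$ is ``pushed'' to $i_\ell$; but this overwrites the pairing you just established for the $\ell$-th row, which was row $\ell\leftrightarrow i_b$. The greedy actually gives: once row $\ell$ has taken $i_b$, the remaining columns are $\{i_1,\dots,i_{b-1},i_{b+1},\dots,i_\ell\}$, and since rows $\ell-1,\ell-2,\dots$ are globally decreasing with strictly decreasing $\beta$-powers, row $\ell-1$ takes the largest remaining column $i_\ell$, row $\ell-2$ takes $i_{\ell-1}$, and so on down to row $b$ taking $i_{b+1}$, while rows $1,\dots,b-1$ keep $i_1,\dots,i_{b-1}$. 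In other words rows $b,\dots,\ell-1$ receive $i_{b+1},\dots,i_\ell$ and row $\ell$ receives $i_b$ — the inverse of what your middle sentence says. Your final paragraph, which uses $\sigma(\ell)=b$ and $\sigma(k)=k+1$ for $b\le k\le\ell-1$, is the correct conclusion and is what the cascade actually yields, so this is a slip in the write-up of the cascade rather than a flaw in the method; but as written that sentence contradicts both your own row-$\ell$ step and your conclusion, and should be corrected.
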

\begin{proof}
Among the entries of the $\ell$-th row of $M_{\ab,\ell}$, the right-most entry (i.e. the $b$-th entry) of $I_{q(I)}=[\alpha_q] \setminus [\alpha_{q-1}]$ 
is the smallest for the weights of the monomials in $\det(X_I)$, so we must choose it to make the weight as small as possible. 
Since the other rows are arranged in the descreasing order, we obtain the desired permutation. 
\end{proof}

\begin{Remark}
(a) When $\ab=(n)$, i.e., $s=1$, 
the corresponding block diagonal matching field is so-called the {\em diagonal matching field} (see \cite[Example 1.3]{sturmfels1993maximal}). 
This actually gives rise to the Gelfand-Tsetlin degeneration (see \cite[Section 14]{miller2005combinatorial}).

(b) The terminology ``$s$-block diagonal'' is used in \cite{higashitani2022quadratic}, which is the special case of Definition~\ref{def:block} with $\ell=2$. 
\end{Remark}

\begin{Example}\label{ex:block}
If $r=4$, $n=9$ and $\ab=(2,2,3,2)$ and $\ell=3$, then 
\[
M_{\ab,\ell}=
\begin{pmatrix}
0 & 0 & 0 & 0 & 0 & 0 & 0 & 0 & 0\\
9 &8 &7 &6 &5 &4 &3 &2 &1 \\
200 & 100 & 400 & 300 & 700 & 600 & 500 & 900 & 800 \\
90000 &80000 &70000 &60000 &50000 &40000 &30000 &20000 &10000 
\end{pmatrix}.
\]
For example, the matching field $\Lambda_{\ab,3}$ sends some elements of $\Ib_{4,9}$ as follows: 
\begin{align*}
1234 \mapsto (2 \ 3), \;\; 1349 \mapsto (1 \ 2 \ 3), \;\; 5678 \mapsto {\rm id}. 
\end{align*}
\end{Example}

\begin{Example}[Continuation of Example~\ref{ex:block}]
Work with the same $\Lambda_{\ab,3}$ as in Example~\ref{ex:block}. 
For $I=1234$, we see from Proposition~\ref{prop:bdmf} that 
the $3$rd (i.e. $\ell$-th) entry of $T_I$ is $2$, and $1,3,4$ are arrayed in the increasing order in the remaining boxes. 
Namely, we have $T_{1234}=\begin{ytableau}1 \\ 3 \\ 2 \\ 4\end{ytableau}$. 
Similarly, we have $T_{1349}=\begin{ytableau}3 \\ 4 \\ 1 \\ 9\end{ytableau}$, $T_{5678}=\begin{ytableau}5 \\ 6 \\ 7 \\ 8\end{ytableau}$, 
$T_{5689}=\begin{ytableau}5 \\ 8 \\ 6 \\ 9\end{ytableau}$, and so on. 
\end{Example}


Moreover, we have the following: 
\begin{Proposition}\label{prop:a1}
Let $\ab=(a_1,\ldots,a_s)$ and assume that $a_1 \leq \ell$. Let $\ab'=(1,a_1-1,a_2,\ldots,a_s)$. 
Then $\Lambda_{\ab,\ell}$ provides a SAGBI basis for $\Ac_{r,n}$ if and only if so does $\Lambda_{\ab',\ell}$.
\end{Proposition}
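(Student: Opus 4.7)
The plan is to first pinpoint, via Proposition~\ref{prop:bdmf}, the exact subsets $I \in \Ib_{r,n}$ on which the matching fields $\Lambda_{\ab,\ell}$ and $\Lambda_{\ab',\ell}$ disagree. A direct case analysis using $q(I)$ and $b_0(I) := |I \cap [a_1]|$ shows that they coincide on every $I$ with either $1 \notin I$ or $b_0(I) \le 1$; in those cases, the first block intersecting $I$ contributes the same elements of $I$ under both block structures, so Proposition~\ref{prop:bdmf} returns identical permutations. The two fields disagree precisely on the \emph{exceptional} subsets with $1 \in I$ and $b_0(I) \ge 2$. The hypothesis $a_1 \le \ell$ guarantees $b_0(I) \le a_1 \le \ell$, so Proposition~\ref{prop:bdmf} still applies: the tableau of $I$ under $\Lambda_{\ab,\ell}$ has $i_{b_0}$ in position $\ell$ (Type $b_0$), whereas the tableau under $\Lambda_{\ab',\ell}$ has $i_1 = 1$ in position $\ell$ (Type $1$), with the remaining entries in increasing order in each case.

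Unpacking this for an exceptional $I = \{i_1 = 1 < i_2 < \cdots < i_r\}$, the two tableaux agree on rows $b_0, \ldots, \ell-1$ and $\ell+1, \ldots, r$, but distribute the columns $\{i_1, i_2, \ldots, i_{b_0}\} \subseteq [a_1]$ across the rows $\{1, \ldots, b_0-1, \ell\}$ in two different ways. Under $\Lambda_{\ab,\ell}$, row $k$ receives $i_k$ for $k < b_0$ and row $\ell$ receives $i_{b_0}$; under $\Lambda_{\ab',\ell}$, row $k$ receives $i_{k+1}$ for $k < b_0$ and row $\ell$ receives $i_1$. The two pairings differ by a cyclic rotation of the $b_0$ columns among the $b_0$ rows, and the corresponding initial monomials of $\det(\xb_I)$ are related by the same cyclic permutation of a $b_0 \times b_0$ sub-minor.

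To establish the SAGBI equivalence, I would invoke Theorem~\ref{thm:enough}: a coherent matching field $\Lambda$ provides a SAGBI basis if and only if $J_\Lambda = \ini_{w_M}(I_{r,n})$. The plan is to construct a bijection between the binomial generators of $J_{\Lambda_{\ab,\ell}}$ and $J_{\Lambda_{\ab',\ell}}$, which are determined by row-wise tableau equalities as in Section~\ref{sec:matching_fields}, and simultaneously to match the associated initial ideals of the Pl\"ucker ideal under the weights $w_{M_{\ab,\ell}}$ and $w_{M_{\ab',\ell}}$. This would convert ``$J_{\Lambda_{\ab,\ell}} = \ini_{w_{M_{\ab,\ell}}}(I_{r,n})$'' into ``$J_{\Lambda_{\ab',\ell}} = \ini_{w_{M_{\ab',\ell}}}(I_{r,n})$'' and back.

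The hard part will be verifying the bijection on products $P_{I_1} \cdots P_{I_m}$ where several exceptional subsets may appear simultaneously, since the cyclic exchange described above must be tracked globally across the combined tableau. The hypothesis $a_1 \le \ell$ is essential to keeping the discrepancy confined: it forces the differing columns to lie in $[a_1]$ and the differing rows to be among $\{1, \ldots, \ell\}$, so all the combinatorics take place inside a $b_0 \times b_0$ sub-minor of $\xb_I$ and can be handled uniformly and independently of the rest of the tableau.
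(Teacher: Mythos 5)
Your description of where the two matching fields agree and disagree is accurate: they differ exactly on the subsets $I$ with $1 \in I$ and $|I \cap [a_1]| \geq 2$, and the two resulting tableaux for such an $I$ do differ by a cyclic exchange within a $b_0 \times b_0$ block. However, the approach you outline from there has a genuine gap, and you have in fact identified the gap yourself in your last two paragraphs: the cyclic row permutation you describe is \emph{per-$I$} (its size and support depend on $b_0(I)$ and on which columns of $[a_1]$ lie in $I$), so it does not come from a single ring automorphism of $R = \KK[\xb]$. Consequently there is no evident ``bijection between the binomial generators of $J_{\Lambda_{\ab,\ell}}$ and $J_{\Lambda_{\ab',\ell}}$'' induced by the identity $P_I \mapsto P_I$ together with this local adjustment: a binomial $P_{I_1}\cdots P_{I_m} - P_{J_1}\cdots P_{J_m}$ may mix exceptional and non-exceptional subsets with different $b_0$-values, and the local row swaps do not patch together into a relation for $\Lambda_{\ab',\ell}$. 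This is precisely the ``hard part'' you flag, and it is not a technicality; it is the whole content of the proof, and your proposal does not close it.

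The paper's proof sidesteps the difficulty by changing coordinates, rather than fixing $I$ and comparing. Take the cyclic \emph{column} permutation $\theta = (1\,2\,\cdots\,a_1) \in \Sf_n$, acting on the variables $x_{ij}$ via $j \mapsto \theta(j)$ and on $r$-subsets via $I \mapsto \theta(I)$. Because $a_1 \leq \ell$, one checks from Proposition~\ref{prop:bdmf} (by the same case analysis you carry out) that for every $I \in \Ib_{r,n}$,
\[
\ini_{M_{\ab',\ell}}\bigl(\det(\xb_{\theta(I)})\bigr) \;=\; \theta\bigl(\ini_{M_{\ab,\ell}}(\det(\xb_I))\bigr),
\]
i.e.\ $\Lambda_{\ab',\ell}$ is the $\theta$-transport of $\Lambda_{\ab,\ell}$. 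Since $\theta$ is a \emph{single} global relabeling of the columns of $\xb$, Remark~\ref{rem:permute} applies directly and finishes the proof in one step. The essential shift from your proposal to the paper's proof is: do not compare $\Lambda_{\ab,\ell}(I)$ with $\Lambda_{\ab',\ell}(I)$; compare $\Lambda_{\ab,\ell}(I)$ with $\Lambda_{\ab',\ell}(\theta(I))$. The hypothesis $a_1 \leq \ell$ is exactly what guarantees this identity (it fails if $a_1 > \ell$, as one can check on small examples with $b > \ell$), which also explains why your per-$I$ picture looked so close to a cyclic symmetry without quite closing.
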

\begin{proof}
Let $\theta=(1 \, 2 \, \cdots \, a_1) \in \Sf_n$. This induces the permutation of columns, i.e., the variables of $\KK[\xb]$. 
Then for any $I \in \Ib_{r,n}$, we see that $\ini_{M_{\ab',\ell}}(\det(\xb_{\theta(I)}))=\theta(\ini_{M_{\ab,\ell}}(\xb_I))$ 
by definition of $\Lambda_{\ab,\ell}$ and $\Lambda_{\ab',\ell}$. Hence, the assertion follows from Remark~\ref{rem:permute}. 
\end{proof}

\subsection{New initial monomials}\label{subsec:initial}

For the proofs of the theorems given in Sections~\ref{sec:notSAGBI} and \ref{sec:yesSAGBI}, 
we discuss the initial monomials of $\psi(P_IP_J-P_{I'}P_{J'})$ with respect to a coherent matching field $\Lambda$ in terms of tableaux. 

We say that a tableau $T'$ is obtained from a tableau $T$ by a \textit{vertical swap} if $T'$ is obtained by swapping two boxes 
which are placed in the same column. 
For example, $\begin{ytableau} 1 &3 \\ 3 &4 \\ 2 &5 \\ 4 &6\end{ytableau}$ can be obtained 
from $\begin{ytableau} 1 &3 \\ 2 &4 \\ 3 &5 \\ 4 &6\end{ytableau}$ by a vertical swap. 
In this case, we swap two adjacent boxes. 
For example, $\begin{ytableau} 1 &3 \\ 3 &4 \\ 4 &5 \\ 2 &6\end{ytableau}$ can be obtained 
from $\begin{ytableau} 2 &3 \\ 3 &4 \\ 4 &5 \\ 1 &6\end{ytableau}$ by a vertical swap. 
In this case, the swapped boxes are not vertically adjacent, but it could also happen in the discussions below.

Regarding the monomials appearing in $\psi(P_I)$ for $I \in \Ib_{r,n}$ with respect to the block diagonal matching field $\Lambda_{\ab,\ell}$, 
we can observe that the monomial $\xb_{\sigma(I)}$ cannot be the second lowest monomial  
if the corresponding tableaux $T_{\sigma(I)}$ is obtained by at least two vertical swaps from $T_I$. This follows from by definition of $M_{\ab,\ell}$. 
This implies that for $I,J,I',J' \in \Ib_{r,n}$ such that $T_{I,J}$ and $T_{I',J'}$ are row-wise equal, 
the tableau corresponding to the initial monomial of $\psi(P_IP_J-P_{I'}P_{J'})$ can be obtained by a vertical swap of $T_{I,J}$ or $T_{I',J'}$. 

\begin{Example}
Consider the block diagonal matching field $\Lambda_{(2,4),2}$. 
Note that $\Lambda_{(2,4),2}$ is induced by $M_{(2,4),2}=\begin{pmatrix} 0 &0 &0 &0 &0 &0 \\ 2 &1 &6 &5 &4 &3 \\ 60 &50 &40 &30 &20 &10 \end{pmatrix}$. 
Let us discuss the initial monomial of $\psi(P_{135}P_{346}-P_{136}P_{345})$ with respect to $\Lambda_{(2,4),2}$. 
Here, $T_{135,346}$ and $T_{136,345}$ look like $\begin{ytableau} 3 &3 \\ 1 &4 \\ 5 &6 \end{ytableau}\;\text{ and }\;\begin{ytableau} 3 &3 \\ 1 &4 \\ 6 &5 \end{ytableau},$
which are row-wise equal. This means that the initial monomials of $\psi(P_{135}P_{346})$ and $\psi(P_{136}P_{345})$ coincide. 

Hence, we would like to consider which monomial among $\psi(P_{135}P_{346})$ and $\psi(P_{136}P_{345})$ 
becomes a new initial monomial of $\psi(P_{135}P_{346}-P_{136}P_{345})$. 
As mentioned above, we see that a new initial monomial can be obtained by a vertical swap of $T_{135,346}$ or $T_{136,345}$. 
If we apply a vertical swap on the first and second rows of $T_{135}$, then the swapped tableaux still become row-wise equal. 
Namely, this vertical swap does not give us a new initial monomial. 
Similarly, we can observe that any vertical swap on the first and second rows is not applicable. Hence, the monomials 
\[\begin{ytableau} 3 &3 \\ 1 &6 \\ 5 &4 \end{ytableau}, \;\; \begin{ytableau} 3 &3 \\ 1 &5 \\ 6 &4 \end{ytableau}, \;\; 
\begin{ytableau} 3 &3 \\ 5 &4 \\ 1 &6 \end{ytableau} \;\text{ and }\;\begin{ytableau} 3 &3 \\ 6 &4 \\ 1 &5 \end{ytableau}\]
are the candidates as a new initial monomial, which are obtained by vertical swaps on the second and third rows. 
By considering to make the weight of the third row as small as possible, 
we see that $\begin{ytableau} 3 &3 \\ 1 &5 \\ 6 &4 \end{ytableau}$ becomes a new initial monomial. 
\end{Example}
\begin{Example}
Let us discuss the new initial monomial of $\psi(P_{2678}P_{4567}-P_{5678}P_{2467})$ with respect to $\Lambda_{(3,2,3),4}$ on $\Gr(4,8)$. 
Then $T_{2678,4567}$ and $T_{5678,2467}$ look like $\ytableausetup{boxsize=1em}
\begin{ytableau} 6 & 4  \\ 7 & 6 \\ 8 & 7 \\ 2 & 5 \end{ytableau}$ and $\begin{ytableau} 6 & 4  \\ 7 & 6 \\ 8 & 7 \\ 5 & 2  \end{ytableau}$, respectively.  
Then the initial monomial corresponds to $\begin{ytableau} 6 & 5  \\ 7 & 6 \\ 8 & 7 \\ 2 &4 \end{ytableau}$. 
(The reason why this becomes the initial monomial will be explained in Example~\ref{ex:t=l}.) 
\end{Example}


\medskip


\section{$(\ab,\ell)$-block diagonal matching fields which do not provide a SAGBI basis}\label{sec:notSAGBI}


The goal of this section is to prove the following: 
\begin{Theorem}\label{thm:onlyif}
Work with the same notation as in Definition~\ref{def:block}. 
Assume that $a_s \geq 2$ and $\ell \geq 4$. If one of the following conditions is satisfied, 
then $(\ab,\ell)$-block diagonal matching field does not provide a SAGBI basis: 
\begin{itemize}
\item $a_1 \geq 5$; 
\item there is $2 \leq i \leq s-1$ with $a_i \geq 4$ and $r+2 \leq \sum_{u=i}^s a_u$. 
\end{itemize}
\end{Theorem}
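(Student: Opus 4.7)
The strategy is to apply Theorem~\ref{thm:enough}: for each of the two hypothesis cases, I plan to exhibit a Pl\"ucker binomial $g = P_I P_J - P_{I'} P_{J'} \in I_{r,n}$ such that the initial form of $\psi(g)$ with respect to $M_{\ab,\ell}$ is a nonzero monomial lying outside the matching field ideal $J_{\Lambda_{\ab,\ell}}$. Using the tableau framework of Subsection~\ref{subsec:initial}, this reduces to choosing $I, J, I', J' \in \Ib_{r,n}$ with $T_{I,J}$ row-wise equal to $T_{I',J'}$, identifying the single vertical swap (on one of these two tableaux) that produces the lowest-weight among all competing vertical-swap tableaux, and verifying that the resulting two-column tableau cannot be split as $(T_K, T_L)$ for any $K, L \in \Ib_{r,n}$ under $\Lambda_{\ab,\ell}$.

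For the case $a_1 \geq 5$, I would pick $I$ and $J$ with $|I \cap I_1|, |J \cap I_1| \geq \ell$, arranged so that the top $\ell$ rows of $T_{I,J}$ sit entirely in the first block. The hypothesis $a_1 \geq 5$ together with $\ell \geq 4$ affords enough distinct first-block indices to set up a vertical swap between rows in the low $\beta$-weighted block of $M_{\ab,\ell}$ whose target tableau violates the shape described in Proposition~\ref{prop:bdmf}: the entry landing in row $\ell$ fails to be the maximum of its column's intersection with $I_1$, so no preimage $K$ exists under $\Lambda_{\ab,\ell}$. The case $a_i \geq 4$ with $r+2 \leq \sum_{u=i}^s a_u$ for some interior $i$ is handled analogously, but the construction is carried out entirely within $I_i \cup \cdots \cup I_s$; the bound $r+2 \leq \sum_{u=i}^s a_u$ guarantees this union has enough elements to house both $I$ and $J$ while leaving room for the swap, and the block-internal weight structure of $M_{\ab,\ell}$ makes the same swap argument transfer essentially verbatim, with the active rows now indexed by the columns of the $i$-th block.

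The main obstacle will be the two-part verification that (a) the proposed vertical swap is indeed initial, which requires a weight comparison against all other vertical swaps under the $\beta \gg 0$ dominance across rows of $M_{\ab,\ell}$, and (b) the resulting two-column tableau admits no preimage $(\Lambda_{\ab,\ell}(K), \Lambda_{\ab,\ell}(L))$. Step (b) is the more delicate combinatorial check: by Proposition~\ref{prop:bdmf}, in any candidate $(K,L)$ the $\ell$-th entry of each column of $T_K, T_L$ is pinned to be the largest entry of that column's intersection with its initial occupied block. The argument then shows that the swapped configuration forces an entry into row $\ell$ that is strictly smaller than another entry of its column sharing the same block, ruling out every preimage and completing the non-SAGBI certificate.
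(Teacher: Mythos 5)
Your high-level strategy --- exhibit a Pl\"ucker binomial $P_IP_J - P_{I'}P_{J'}$ whose new initial form (obtained from a single vertical swap of $T_{I,J}$ or $T_{I',J'}$) has no preimage decomposition $(T_K, T_L)$ --- is the same as the paper's. However, the explicit construction you propose for $a_1 \geq 5$ is infeasible in general. You require $|I\cap I_1|, |J\cap I_1| \geq \ell$, which forces $a_1 \geq \ell$; but the hypotheses $a_1\geq 5$, $4 \leq \ell \leq r$ impose no such bound, and indeed $\ell$ can be as large as $r$. For instance, with $a_1=5$ and $\ell = r = 10$ there is no $I \in \Ib_{r,n}$ with $|I\cap I_1|\geq 10$. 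The paper's construction takes $I$ of type $a_1-2$ (when $a_1\leq r$; type $r-1$ when $a_1 > r$), which is strictly less than $\ell$, so $\Lambda_{\ab,\ell}(I)$ is a nontrivial cycle and the $\ell$-th row carries $\max(I\cap I_1)$ --- quite different from the all-identity regime your $|I\cap I_1|\geq\ell$ requirement lands in.

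The non-splittability argument you sketch also does not capture the correct obstruction. You claim the swapped tableau is ruled out because ``the entry landing in row $\ell$ fails to be the maximum of its column's intersection with $I_1$.'' But a candidate column $K$ need not intersect $I_1$ at all (its first occupied block $q(K)$ may be later), and when $K$ has type $\geq\ell$ the $\ell$-th entry is the $\ell$-th smallest element of $K$, not $\max(K\cap I_{q(K)})$. The paper's actual argument (see Example~\ref{ex:624}) fixes entries of the candidate columns one row at a time using \eqref{eq:BDMF} and derives a contradiction such as a repeated entry or an impossible permutation type; no single ``row $\ell$ is not the max'' criterion suffices. Finally, for the second case your proposal to carry out the construction ``entirely within $I_i\cup\cdots\cup I_s$'' conflicts with what is actually needed: the paper's $J$ deliberately includes $1\in I_1$ so that $J$ is of type $1$, and removing that element destroys the row-$\ell$ collision the proof depends on.
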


Before giving a proof of this theorem, let us see the statement by some examples. 
\begin{Example}\label{ex:624}
A block diagonal matching field $\Lambda_{(6,2),4}$ of $\Gr(4,8)$ does not provide a SAGBI basis. 
\begin{proof}
Consider the initial monomial of $\psi(P_{1237}P_{4568} - P_{1238}P_{4567})$. 
Since $1237, 4568, 1238$ and $4567$ are of type $3$, we have the tableaux description: 
$\begin{ytableau} 1 & 4  \\ 2 & 5 \\ 7 & 8 \\ 3 &6 \end{ytableau}-\begin{ytableau} 1 & 4  \\ 2 & 5 \\ 8 & 7 \\ 3 &6 \end{ytableau}$. 
Note that those are row-wise equal. 
By applying a similar discussion in Subsection~\ref{subsec:initial}, we see that the initial monomial of $\psi(P_{1237}P_{4568} - P_{1238}P_{4567})$ corresponds to 
\begin{align}\label{eq:hanrei}\begin{ytableau} 1 & 4  \\ 2 & 7 \\ 8 & 5 \\ 3 &6 \end{ytableau}\end{align} 

Next, we show that the monomial \eqref{eq:hanrei} cannot be written as a product of the initial monomials of two Pl\"ucker variables. 
On the contrary, suppose that there are $I,I' \in \Ib_{4,8}$ such that $T_{I,I'}$ is row-wise equal to \eqref{eq:hanrei}. 
Let $\ytableausetup{boxsize=1.3em}
T_I=\begin{ytableau} i_1 \\ i_2 \\ i_3 \\ i_4 \end{ytableau}$ and $T_{I'}=\begin{ytableau} i_1' \\ i_2' \\ i_3' \\ i_4' \end{ytableau}$. 
Without loss of generality, we may assume that $i_3=5$. 
In the case of $i_2=7$, even if $i_4=3$ or $i_4=6$, we see that $\Lambda(I) \not\in \{\id, (3 \; 4), (2 \; 3 \; 4), (1 \; 2 \; 3 \; 4)\}$, 
a contradiction to Proposition~\ref{prop:bdmf} (see \eqref{eq:BDMF}). 
Thus, $i_2=2$. Similarly, we also obtain that $i_1=1$. Hence, $i_4=6$, otherwise $I=\{1,2,3,5\}$ and $\Lambda(I)=(3 \; 4)$, a contradiction to \eqref{eq:BDMF}. 
Hence, $I=\{1,2,5,6\}$, and so $I'=\{3,4,7,8\}$. Then $\Lambda(I')=(2 \; 3 \; 4)$, a contradiction to the form $T_{I'}$. 

Therefore, there are no $I,I' \in \Ib_{4,8}$ such that $T_{I,I'}$ is row-wise equal to \eqref{eq:hanrei}. 
\end{proof}
\end{Example}
\begin{Example}\label{ex:524}
A block diagonal matching field $\Lambda_{(5,2),4}$ of $\Gr(4,7)$ does not provide a SAGBI basis. 
\begin{proof}
Consider the initial monomial of $\psi(P_{1236}P_{3457} - P_{1237}P_{3456})$. 
Since $1236, 3457, 1237$ and $3456$ are of type $3$, we have the tableaux description $\ytableausetup{boxsize=1em}
\begin{ytableau} 1 & 3  \\ 2 & 4 \\ 6 & 7 \\ 3 &5 \end{ytableau} - \begin{ytableau} 1 & 3  \\ 2 & 4 \\ 7 & 6 \\ 3 &5 \end{ytableau}$, 
which are row-wise equal. 
Then the initial monomial of $\psi(P_{1236}P_{3457} - P_{1237}P_{3456})$ corresponds to $\begin{ytableau} 1 & 3  \\ 2 & 6 \\ 7 & 4 \\ 3 &5 \end{ytableau}$.  
If there are $I,I' \in \Ib_{4,7}$ such that $T_{I,I'}$ is row-wise equal to this tableau, where we let the third entry of $I$ is $4$, 
then we see that $T_I=\begin{ytableau} 1 \\ 2 \\ 4 \\ 5\end{ytableau}$, so $T_{I'}$ must be $\begin{ytableau} 3 \\ 6 \\ 7 \\ 3\end{ytableau}$. 
This is a contradiction since $3$ appears twice. 
\end{proof}
\end{Example}

By the similar idea to Example~\ref{ex:524}, we can prove Theorem~\ref{thm:onlyif}. 
\begin{proof}[Proof of Theorem~\ref{thm:onlyif}]
We prove the case of $\ell=r$. (Other cases can be similarly proved.) 

\noindent
\underline{$a_1 \geq 5$}: 
Let $a_1 = a$. 

Let us discuss the case $a \leq r$. 
Since we always assume $n-r \geq 3$, i.e., $n \geq r+3$ (see Remark~\ref{rem:assume}), we have $\sum_{i=2}^s a_i=n-a \geq r+3-a$. 
Hence, there are at least $(r+3-a)$ columns from the second block after the first $a$ columns. Let 
\begin{align*}
I&=[r+3] \setminus \{a-1,a,a+2\}, \;\; J=[r+3] \setminus \{1,2,a+1\}, \\
I'&=(I \setminus \{a+1\}) \cup \{a+2\} \;\;\text{and}\;\; J'=(J \setminus \{a+2\}) \cup \{a+1\}. 
\end{align*}
Then all of $I,J,I',J'$ are of type $(a-2)$, and $T_{I,J}$ and $T_{I',J'}$ look as follows: 
$$\ytableausetup{boxsize=2.2em}
T_{I,J}=\begin{ytableau} 1 &3 \\ \vdots &\vdots \\ a-3 &a-1 \\ a+1 &a+2 \\ a+3 &a+3 \\ \vdots &\vdots \\ a-2 &a\end{ytableau} \;\; \text{ and } \;\;
T_{I',J'}=\begin{ytableau} 1 &3 \\ \vdots &\vdots \\ a-3 &a-1 \\ a+2 &a+1 \\ a+3 &a+3 \\ \vdots &\vdots \\ a-2 &a\end{ytableau}. 
$$
Note that $T_{I,J}$ and $T_{I',J'}$ are row-wise equal, the only difference is the $(a-2)$-th row. 
Actually, the rows after the $(a-2)$-th row of both $T_{I,J}$ and $T_{I',J'}$ are not involved in the following discussions. 
Consider the initial monomial of $\psi(P_IP_J-P_{I'}P_{J'})$. 
By a similar discussion of Subsection~\ref{subsec:initial}, we see that a crucial vertical swap is the one on the $(a-3)$-th and $(a-2)$-th rows. 
Namely, the new initial monomial appears by applying such a vertical swap. 
Since it is better that $a+2$ in the $(a-2)$-th row stays the same row to make the weight smaller, we see that our desired vertical swap is 
the one replacing $a-1$ and $a+1$ of $T_{I',J'}$, i.e., the initial monomial of $\psi(P_IP_J-P_{I'}P_{J'})$ corresponds to the tableau 
$\begin{ytableau} 1 &3 \\ \vdots &\vdots \\ a-3 &a+1 \\ a+2 &a-1 \\ a+3 &a+3 \\ \vdots &\vdots \\ a-2 &a\end{ytableau}$.
Suppose that $I'',J'' \in \Ib_{r,n}$ such that $T_{I'',J''}$ is row-wise equal to this tableau. 
Without loss of generality, we may assume that the $(a-2)$-th entry of $T_{I''}$ is $a-1$. 
Then, for each $1 \leq i \leq a-3$, we see that the $i$-th entry of $T_{I''}$ should be $i$ due to \eqref{eq:BDMF}. 
Thus, for each $1 \leq j \leq r-1$, the $j$-th entry of $T_{J''}$ should be $j+2$. 
In particular, by the assumption $a \geq 5$, both $a-2$ and $a$ are included in the first $(r-1)$ rows of $T_{J''}$. 
This leads a contradiction since the $r$-th row should be either $a-2$ or $a$. 

In the case $a>r$, we may set $I,J,I',J'$ as follows: 
\begin{align*}
I&=\{1,2,\ldots,r-1\} \cup \{n-1\}, \;\; J=\{3,4,\ldots,r+1\} \cup \{n\}, \\
I'&=\{1,2,\ldots,r-1\} \cup \{n\} \;\text{ and }\; J'=\{3,4,\ldots,r+1\} \cup \{n-1\}. 
\end{align*}
Then $I$, $I'$, $J$ and $J'$ are of type $(r-1)$, and $T_{I,J}$ and $T_{I',J'}$ looks as follows: 
$$\ytableausetup{boxsize=2.1em}
T_{I,J}=\begin{ytableau} 1 &3 \\ \vdots &\vdots \\ r-2 &r \\ n-1 &n \\ r-1 &r+1\end{ytableau} \;\; \text{ and } \;\;
T_{I',J'}=\begin{ytableau} 1 &3 \\ \vdots &\vdots \\ r-2 &r \\ n &n-1 \\ r-1 &r+1\end{ytableau}, 
$$
where we remark that $r<n-1$ since $n-r \geq 3$ is assumed, 
and the initial monomial of $\psi(P_IP_J-P_{I'}P_{J'})$ corresponds to $\begin{ytableau}1 &3 \\ \vdots &\vdots \\ r-2 &n-1 \\ n &r \\ r-1 &r+1\end{ytableau}$. 
If there were $I'',J'' \in \Ib_{r,n}$ such that $T_{I'',J''}$ is row-wise equal to this, where we let $n-1 \in I''$, 
then $I''$ should be of type either $(r-1)$ or $(r-2)$, but this is a contradiction to the possible form of $T_{I''}$. 

\medskip

\noindent
\underline{There is $2 \leq i \leq s-1$ such that $a_i \geq 4$ and $r+2 \leq \sum_{u=i+1}^sa_u$}: 
Let $i$ be the minimum integer such that $a_i \geq 4$ and $r+2 \leq \sum_{u=i}^sa_u$ are satisfied. 
Let $p_1<p_2<\ldots<p_{a_i}$ be the integers with $\{p_j : j=1,\ldots,a_i\}=\{\sum_{u=1}^{i-1}a_u+1,\ldots,\sum_{u=1}^ia_u\}$,  
and let $q_1<q_2<\ldots$ be the ones with $\{q_j : j \geq 1\}=\{\sum_{u=1}^ia_u+1,\ldots,n\}$. 
By our assumption, we see that $|\{q_j : j \geq 1\}| \geq r+2$. 

Let us discuss the case $a_i \leq r$. Let
\begin{align*}
I&=\{p_1,\ldots,p_{a_i-1}\} \cup \{q_1\} \cup \{q_3,\ldots,q_{r+2-a_i}\}, \\
J&=\{1\} \cup \{p_3,\ldots,p_{a_i}\} \cup \{q_2,\ldots,q_{r+2-a_i}\}, \\ 
I'&=(I \setminus \{q_1\}) \cup \{q_2\} \;\;\text{ and }\;\; J'=(J \setminus \{q_2\}) \cup \{q_1\}. 
\end{align*}
Then $I$ and $I'$ are of type $(a_i-1)$, while $J$ and $J'$ are of type $1$. Hence, $T_{I,J}$ and $T_{I',J'}$ look as follows: 
\begin{align*}\ytableausetup{boxsize=2.3em}
T_{I,J}=\begin{ytableau} p_1 &p_3 \\ \vdots &\vdots \\ p_{a_i-2} &p_{a_i} \\ q_1 &q_2 \\ q_3 &q_3 \\ \vdots &\vdots \\ p_{a_i-1} &1 \end{ytableau} \;\; \text{ and } \;\;
T_{I',J'}=\begin{ytableau} p_1 &p_3 \\ \vdots &\vdots \\ p_{a_i-2} &p_{a_i} \\ q_2 &q_1 \\ q_3 &q_3 \\ \vdots &\vdots \\ p_{a_i-1} &1 \end{ytableau}. 
\end{align*}
Thus, the initial monomial of $\psi(P_IP_J-P_{I'}P_{J'})$ corresponds to 
$\begin{ytableau} p_1 &p_3 \\ \vdots &\vdots \\ p_{a_i-2} &q_1 \\ q_2 &p_{a_i} \\ q_3 &q_3 \\ \vdots &\vdots \\ p_{a_i-1} &1 \end{ytableau}$. 
If there were $I'',J'' \in \Ib_{r,n}$ such that $T_{I'',J''}$ is row-wise equal to this, where we let $p_{a_i} \in I''$, 
then the $j$-th row of $T_{I''}$ should be $p_j$ for each $1 \leq j \leq a_i-2$. Moreover, we see that the $r$-th row of $T_{I''}$ should be $1$. 
Then $p_{a_i-1}$ appears twice in $T_{J''}$, a contradiction. 

In the case $a_i>r$, we may set
\begin{align*}
I&=\{p_1,\ldots,p_{r-1}\} \cup \{q_1\}, \;\;J=\{1\} \cup \{p_3,\ldots,p_r\} \cup \{q_2\}, \\
I'&=(I \setminus \{q_1\}) \cup \{q_2\} \;\;\text{ and }\;\; J'=(J \setminus \{q_2\}) \cup \{q_1\} 
\end{align*}
and apply the similar discussion. 
\end{proof}

\medskip


\section{Toric degenerations associated to $(\ab,\ell)$-block diagonal matching fields}\label{sec:yesSAGBI}

\subsection{``The first'' part of Theorem~\ref{thm:main}}
In this section, we provide a new family of toric degenerations of $\Gr(r,n)$. We prove the first part of Theorem \ref{thm:main}. 
\begin{Theorem}\label{thm:if}
Let $\ab=(a_1,\ldots,a_s) \in \ZZ_{>0}^s$ satisfying $\sum_{i=1}^s a_i = n$ and $a_s \geq 2$, and let $\ell \geq 4$. 
Assume that $a_1 \leq 3$ and $a_i \leq 2$ for any $2 \leq i \leq s-1$. 
Then $(\ab,\ell)$-block diagonal matching field provides a SAGBI basis. 
\end{Theorem}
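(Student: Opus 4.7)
The plan is to apply Sturmfels' criterion (Theorem~\ref{thm:enough}): since $J_{\Lambda_{\ab,\ell}} \subseteq \ini_{w_M}(I_{r,n})$ is automatic for $M = M_{\ab,\ell}$, it suffices to verify the reverse inclusion. Combined with the Buchberger-type subduction criterion for SAGBI bases and the fact that the Pl\"ucker ideal $I_{r,n}$ is generated by its quadratic relations, the task reduces to the following: for every quadruple $I, J, I', J' \in \Ib_{r,n}$ such that $T_{I,J}$ and $T_{I',J'}$ are row-wise equal, the new initial monomial $T^{\text{new}}$ of $\psi(P_I P_J - P_{I'} P_{J'})$ must be realizable row-wise as $T_{I'',J''}$ for some $I'', J'' \in \Ib_{r,n}$.

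Before the analysis, I would reduce to the case $a_1 = 1$ by repeated application of Proposition~\ref{prop:a1}, which is legitimate because $a_1 \leq 3 \leq \ell$. After this reduction every block $I_k$ with $k \leq s-1$ has size at most $2$, so any subset $I \in \Ib_{r,n}$ has at most two elements in any non-final block. Using the perspective developed in Subsection~\ref{subsec:initial}, the new initial monomial $T^{\text{new}}$ is obtained from $T_{I,J}$ or $T_{I',J'}$ by a single vertical swap, and only the $\ell$-th row of $M_{\ab,\ell}$ carries the block-dependent weights $\alpha_k \beta^{\ell-2}$ that can produce a new initial monomial; therefore the decisive swaps involve the $\ell$-th row. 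For each possible configuration of types of $I, J$ (in the sense of Proposition~\ref{prop:bdmf}) and each possible block in which the differing columns of $T_{I,J}$ and $T_{I',J'}$ sit, I would pin down which vertical swap minimizes the weight and then reconstruct $(I'', J'')$ by reading off the columns of $T^{\text{new}}$ compatibly with the permutation pattern in \eqref{eq:BDMF}.

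The main obstacle is the systematic case analysis, which is extensive because types range over $\{1, \ldots, \ell-1, \text{otherwise}\}$ and the swap may occur across any block. The crucial structural reason the proof works under our hypothesis is that the block-size bound $\leq 2$ on inner blocks prevents precisely the obstructions constructed in the proof of Theorem~\ref{thm:onlyif}: there, the failure of SAGBI required a single block to contribute at least four elements to $I \cup J$, forcing a repeated entry in any candidate $(I'', J'')$. Under our hypothesis no such forced repeat can arise, so a candidate $(I'', J'')$ always exists. I expect the analysis to split into a handful of representative cases (swap inside the first block, now reduced to a single column; swap across two consecutive small blocks; swap involving the large last block $I_s$), with the remaining cases either symmetric to these or handled by the same pattern. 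The case where the swap enters the final block $I_s$ (with $a_s \geq 2$ potentially large) deserves the most care, but the flexibility of having multiple columns available in $I_s$ should make the reconstruction of $(I'', J'')$ routine, completing the verification.
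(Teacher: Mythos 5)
Your high-level plan (reduce via Proposition~\ref{prop:a1} so every inner block has size $\leq 2$, identify the new initial monomial as a single vertical swap, reconstruct a representing pair of columns) is aligned with the paper, but there are two genuine gaps. First, the reduction to degree~$2$ is not justified. You invoke the quadratic generation of the Pl\"ucker ideal $I_{r,n}$, but this is the wrong ideal: the subduction criterion for SAGBI bases runs over a generating set of the \emph{matching field ideal} $J_\Lambda=\ker\psi_\Lambda$, and what would be needed is that $J_{\Lambda_{\ab,\ell}}$ is quadratically generated. For the new class $\ell\ge 4$ this is not known in advance (indeed, quadratic generation of block diagonal matching field ideals is itself a nontrivial theme in the cited literature). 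The paper avoids this issue entirely by working with an arbitrary $f\in S$, taking an arbitrary-degree pair $\Pb^\beta,\Pb^\gamma$ whose tableaux $T,T'$ are row-wise equal, and observing that the vertical swap only perturbs at most two columns while the remaining columns are passive spectators --- so the local analysis looks quadratic, but no global quadratic-generation hypothesis is ever used. (Relatedly, the ``automatic'' inclusion is $\ini_{w_M}(I_{r,n})\subseteq J_\Lambda$, not $J_\Lambda\subseteq\ini_{w_M}(I_{r,n})$ as you wrote.)

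Second, the heart of the proof is the explicit case analysis, which you defer. The paper's organizing variable is not the type of $I,J$ or the block in which the swap lies, but the row index $t$ at which $T$ and $T'$ first differ: the cases $t\notin\{2,\ell,\ell+1\}$ reduce to the diagonal matching field by deleting the $\ell$-th row, while $t=2$, $t=\ell$, and $t=\ell+1$ each need a bespoke tableau rearrangement. Your claim that ``the decisive swaps involve the $\ell$-th row'' is therefore imprecise: the $t=2$ case has the swap in rows $1$ and $2$, yet is nontrivial precisely because the permutation pattern \eqref{eq:BDMF} constrains which single columns are realizable, and the paper must shift the $\ell$-th entries between the two affected columns to produce valid $\ini_<(\psi(P_I))$-shapes. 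Likewise, the heuristic that ``no forced repeat can arise under $a_i\le 2$'' is a plausible intuition inspired by Theorem~\ref{thm:onlyif}, but it does not by itself produce the pair $(I'',J'')$; the paper constructs these explicitly in each sub-case (including a delicate split on whether $\{p_1,q_2,p_\ell'\}$ lies in a single block). Without this construction the argument does not close.
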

\begin{proof}
Thanks to Proposition~\ref{prop:a1}, we may also assume that $a_i \leq 2$ for any $1 \leq i \leq s-1$. 
This implies that any $I \in \Ib_{r,n}$ is always of type $1$ or $2$ unless $I \subset I_s$. 

Let $<$ be the monomial order associated to $\Lambda_{\ab,\ell}$.  
Our goal is to show that any monomial of $\ini_<(\Im(\psi)) \subset \ini_<(\Ac_{r,n})$ can be written as a product of $\ini_<(\psi(P_I))$'s. 

Take an arbitrary polynomial \[f=\sum_{\alpha} c_\alpha\Pb^\alpha \in \KK[P_I : I \in \Ib_{r,n}]=S,\] 
where $c_\alpha \in \KK$ is $0$ all but finitely many $\alpha$'s and $\Pb^\alpha$ stands for a monomial of $S$, 
and consider the initial monomial of $\psi(f)$ with respect to $<$. 
If \[\ini_<(\psi(f))=\max\{\ini_<(\psi(\Pb^\alpha)) : c_\alpha \neq 0\},\] i.e., 
there is a monomial $\Pb^{\alpha_0}$ of $S$ such that $\ini_<(\psi(f))=\ini_<(\psi(\Pb^{\alpha_0}))$, 
then we are done. 

In what follows, we assume that $$\ini_<(\psi(f))<\max\{\ini_<(\psi(\Pb^\alpha)) : c_\alpha \neq 0\}.$$ 
Then we see that there are two monomials $\Pb^\beta$ and $\Pb^\gamma$ in $f$ such that $$\ini_<(\psi(f))=\ini_<(\psi(\Pb^\beta-\epsilon\Pb^\gamma)),$$
where $\epsilon \in \{\pm 1\}$.
Let $T$ (resp. $T'$) be the tableau corresponding to $\ini_<(\psi(\Pb^\beta))$ (resp. $\ini_<(\psi(\Pb^\gamma))$). Then $T$ and $T'$ are row-wise equal.  
Assume that the first $(t-1)$ rows of $T$ and $T'$ are exactly the same and the $t$-th rows are different, where $t \geq 2$. 
We consider the tableau $\tilde{T}$ corresponding to the new initial monomial of $\psi(\Pb^\beta-\epsilon\Pb^\gamma)$. 
Then we see 
that $\tilde{T}$ can be obtained via a vertical swap of the $(t-1)$-th and $t$-th rows of $T$ or $T'$. 

\noindent
{\bf The first step}: First, we discuss the case of $t \neq 2,\ell,\ell+1$. 
By our assumption, any $I \in \Ib_{r,n}$ is of type $1$ or $2$ unless $I \subset I_s$. 
Hence, if $t \neq 2,\ell,\ell+1$, i.e., the vertical swap occurs at the rows not concerned with the first, and $\ell$-th rows, 
then we can deduce the discussion in the case of the diagonal matching field, i.e., we can ignore the $\ell$-th row. 

For example, let us consider the block diagonal matching field $\Lambda_{(2,2,6),5}$ and the new initial monomial corresponding to 
$\ytableausetup{boxsize=1em}
\begin{ytableau} 3 & 4  \\ 5 & 6 \\ 7 & 8 \\ 9 &10 \\ 1 &2 \end{ytableau}-\begin{ytableau} 3 & 4  \\ 5 & 6 \\ 7 & 8 \\ 10 &9 \\ 1 & 2  \end{ytableau}$, 
where this is the case with $\ell=5$ and $t=4$. 
Let us remove the $\ell$-th row from the considering tableaux, like $\ytableausetup{boxsize=1em}
\begin{ytableau} 3 & 4  \\ 5 & 6 \\ 7 & 8 \\ 9 &10 \end{ytableau}-\begin{ytableau} 3 & 4  \\ 5 & 6 \\ 7 & 8 \\ 10 &9 \end{ytableau}$. 
Then we see that this is already deduced to the case of the diagonal matching field for $\Gr(r-1,n)$, 
so there is a tableau which is row-wise equal corresponding to $\ini_<(\psi(\Pb^\delta))$ for some $\delta \in \ZZ_{\geq 0}^{n-1}$. 
In the example, we know that such tableau is $\begin{ytableau} 3 & 4  \\ 5 & 6 \\ 7 & 9 \\ 8 &10 \end{ytableau}$. 
Therefore, we can also obtain the tableau 
corresponding to $\ini_<(\psi(\Pb^\delta))$ for some $\delta \in \ZZ_{\geq 0}^n$, 
i.e., $\begin{ytableau} 3 & 4  \\ 5 & 6 \\ 7 & 9 \\ 8 &10 \\ 1 &2 \end{ytableau}$ in the example.

Hence, we may focus on the cases $t=2,\ell,\ell+1$. 
We set that the new initial monomial occurs from the following: 
$$\ytableausetup{boxsize=1.2em}
\begin{ytableau} p_1 & q_1  & r_1 & \cdots \\ p_2 & q_2  & r_2 & \cdots \\ \vdots & \vdots & \vdots & \cdots \\ p_t & q_t  & r_t & \cdots \\ 
\vdots & \vdots & \vdots & \cdots \\ p_\ell & q_\ell & r_\ell & \cdots \\ \vdots & \vdots & \vdots & \cdots \end{ytableau}-
\begin{ytableau} p_1 & q_1  & r_1 & \cdots \\ p_2 & q_2  & r_2 & \cdots \\ \vdots & \vdots & \vdots & \cdots \\ p_t' & q_t'  & r_t' & \cdots \\ 
\vdots & \vdots & \vdots & \cdots \\ p_\ell' & q_\ell' & r_\ell' & \cdots \\ \vdots & \vdots & \vdots & \cdots \end{ytableau}, $$
where those two tableaux are row-wise equal. In particular, for $t \leq j \leq r$, the contents $p_j',q_j',\ldots$ are just a rearrangement of $p_j,q_j,\ldots$. 
We assume that the vertical swap occurs at the column of $q_i$'s, i.e., the second column of the first tableau in this description.

\noindent
{\bf The second step}: We discuss the case of $t=2$. Namely, we discuss the initial monomial of a polynomial corresponding to 
$\ytableausetup{boxsize=1.2em} T-T'=
\begin{ytableau} p_1 & q_1  & r_1 & \cdots \\ p_2 & q_2  & r_2 & \cdots \\ p_3 &q_3 &r_3 &\cdots \\ \vdots & \vdots & \vdots & \cdots \\\end{ytableau}
-\begin{ytableau} p_1 & q_1  & r_1 & \cdots \\ q_2 & r_2 & \cdots & p_2 \\ p_3' &q_3' &r_3' &\cdots \\ \vdots & \vdots & \vdots & \cdots \\\end{ytableau}$
and assume that its corresponding tableau after the vertical swap looks as follows: 
\begin{align}\label{eq:tab_t=2}
\begin{ytableau} p_1 & q_2  & r_1 & \cdots \\ p_2 & q_1 & r_2 & \cdots \\ p_3' & q_3' &r_3' &\cdots \\ \vdots & \vdots & \vdots & \cdots \end{ytableau}
\end{align} 
Let us consider the tableau which looks as follows: 
\begin{align}\label{eq:tab_t=2new}\begin{ytableau} p_1 & q_2  & r_1 &\cdots \\ q_1 & r_2  & \cdots &p_2 \\ 
p_3' &q_3' &r_3' &\cdots \\ \vdots & \vdots & \vdots & \vdots \end{ytableau}\end{align} 
Note that \eqref{eq:tab_t=2} and \eqref{eq:tab_t=2new} are row-wise equal. 
In what follows, we claim that the monomial corresponding to \eqref{eq:tab_t=2} (this is the same as the monomial corresponding to \eqref{eq:tab_t=2new}) 
can be written as a product of $\ini_<(\psi(P_I))$'s. 
See Example~\ref{ex:t=2} for the explanation of this case. 

Let $I^{(1)} \in \Ib_{r,n}$ (resp. $I^{(2)} \in \Ib_{r,n}$) be such that $T_{I^{(1)}}$ (resp. $T_{I^{(2)}}$) coincides with the first (resp. second) column of $T'$. 
\begin{itemize}
\item The columns of \eqref{eq:tab_t=2new} all but the first two columns are completely the same as those of the tableaux corresponding to $T'$. 
Hence, those columns clearly arise from some initial monomials of certain $\psi(P_I)$'s. 
\item By the shape of \eqref{eq:tab_t=2}, we have $q_2<r_2$ (resp. $p_1<q_1$) 
since the weight of the monomial corresponding to \eqref{eq:tab_t=2} is lower than that corresponding to the one 
obtained from $T'$ (resp. $T$) by the vertical swap of the first and second entries of the second (resp. first) column. 
\item Assume that $q_2<p_\ell'$. Then $p_1<q_2<p_\ell'$. 
Thus $\{p_1,q_2,p_\ell'\} \subset I^{(1)}_{q(I^{(1)})}$ holds by definition of $\Lambda_{\ab,\ell}$. 
Hence, we see that the first column of \eqref{eq:tab_t=2new} corresponds to $\ini_<(\psi(P_{(I^{(1)} \setminus \{q_2\}) \cup \{q_1\}}))$. 
Note that $q_1<q_2<r_2<q_3'<\cdots$ hold. In the cases of both $\{q_1,q_\ell'\} \subset I^{(2)}_{q(I^{(2)})}$ and $\{q_1,q_\ell'\} \not\subset I^{(2)}_{q(I^{(2)})}$, 
we see that the second column corresponds to $\ini_<(\psi(P_{(I^{(2)} \setminus \{q_1\}) \cup \{q_2\}}))$.  
\item Assume that $q_2 \geq p_\ell'$. 
\begin{itemize}
\item If $\{p_1,q_2,p_\ell'\} \not\subset I^{(1)}_{q(I^{(1)})}$, i.e., 
either ``$I^{(1)}_{q(I^{(1)})}=\{p_\ell'\}$ and $p_1,q_2 \not\in I^{(1)}_{q(I^{(1)})}$'' or ``$I^{(1)}_{q(I^{(1)})}=\{p_1,p_\ell'\}$ and $q_2 \not\in I^{(1)}_{q(I^{(1)})}$'', 
then we see that the first column of \eqref{eq:tab_t=2new} corresponds to $\ini_<(\psi(P_{(I^{(1)} \setminus \{q_2\}) \cup \{q_1\}}))$. 
We also see that the second column also corresponds to $\ini_<(\psi(P_{(I^{(2)} \setminus \{q_1\}) \cup \{q_2\}}))$.  
\item Let $\{p_1,q_2,p_\ell'\} \subset I^{(1)}_{q(I^{(1)})}$. 
In this case, we swap the $\ell$-th entries of the first and second columns of \eqref{eq:tab_t=2new}, i.e., $p_\ell'$ and $q_\ell'$. 
Then we see that 
$\begin{ytableau} p_1 \\ q_1 \\ p_3' \\ \vdots \\ q_\ell' \\ \vdots \end{ytableau}$ corresponds to $\ini_<(\psi(P_{(I^{(1)} \setminus \{q_2,p_\ell'\}) \cup\{q_1,q_\ell'\}}))$. 
In fact, if $q_\ell'<q_1$, since $p_1<q_1<q_2$, we have $q_\ell'<p_1$; if $q_\ell' > q_1$, then $\{p_1,q_1,q_2,p_\ell',q_\ell'\} \subset I^{(1)}_{q(I^{(1)})}$ 
(though some of them might be equal). 

Moreover, since $p_\ell' (\leq p_1)<q_2<r_2<q_3'<\cdots$, we see that the second column $\begin{ytableau} q_2 \\ r_2 \\ q_3' \\ \vdots \\ p_\ell' \\ \vdots \end{ytableau}$
corresponds to $\ini_<(\psi(P_{(I^{(2)} \setminus \{q_1,q_\ell'\}) \cup \{q_2,p_\ell'\}}))$. 
\end{itemize}
\end{itemize}

{\bf The third step}: Next, we discuss the case of $t=\ell$. Namely, we discuss the initial monomial corresponding to 
$T-T'=\ytableausetup{boxsize=2em}
\begin{ytableau} p_1 & q_1  & r_1 & \cdots \\ \vdots & \vdots & \vdots & \cdots \\ p_{\ell-1} & q_{\ell-1} & r_{\ell-1} & \cdots \\ 
p_\ell & q_\ell & r_\ell &\cdots \\ \vdots & \vdots & \vdots & \cdots \end{ytableau}
-\begin{ytableau} p_1 & q_1  & r_1 & \cdots \\ \vdots & \vdots & \vdots & \cdots \\ p_{\ell-1} & q_{\ell-1} & r_{\ell-1} & \cdots \\ 
q_\ell & r_\ell &\cdots &p_\ell \\ \vdots & \vdots & \vdots & \cdots \end{ytableau}$.  

Let $I=\{q_1,\ldots,q_r\} \in \Ib_{r,n}$ be corresponding to the second column of $T$ and let 
\[j=\begin{cases}
i-1 &\text{ if $I$ is of type $i$ with $2 \leq i \leq \ell-1$}, \\
|I_{t'} \cap I|+1 &\text{ if $I$ is of type $1$}, \\
\ell-1 &\text{ otherwise}, 
\end{cases}\]
where $t'=\min\{t \in [r] \setminus \{q(I)\} : I_t \cap I \neq \emptyset\}$. 
Since the second smallest entry in the $\ell$-th row among the $q_i$-th entries is the $q_j$-th one, 
the initial monomial of $T-T'$ corresponds to the tableau obtained by replacing $q_\ell$ and $q_j$ of $T$. 
By considering the tableau $\ytableausetup{boxsize=2em}
\begin{ytableau} p_1 & q_1  & r_1 & \cdots \\ \vdots & \vdots & \vdots & \cdots \\ 
p_j & q_\ell & r_j & \cdots \\ \vdots & \vdots & \vdots & \cdots \\ 
p_{\ell-1} & q_{\ell-1} & r_{\ell-1} & \cdots \\ q_j & r_\ell & \cdots &p_\ell \\ \vdots & \vdots & \vdots & \cdots \end{ytableau}$,
which is row-wise equal to the tableau corresponding to the initial monomial, 
we see by a similar discussion as above that each of the columns can be obtained by certain initial monomials. 

See Example~\ref{ex:t=l}. 

\noindent
{\bf The fourth step}: Finally, we discuss the case of $t=\ell+1$. 
In this case, the associated tableau to the initial monomial of $T-T'$ 
can be obtained by replacing $q_{\ell+1}$ and $q_{\ell-1}$ or $q_\ell$. 
(This is because the second smallest weight corresponds to either $q_{\ell-1}$ or $q_\ell$.) 
\begin{itemize}
\item If it is $q_\ell$, then 
$\ytableausetup{boxsize=2em}
\begin{ytableau} p_1 & q_1  & r_1 & \cdots \\ \vdots & \vdots & \vdots & \cdots \\ 
p_{\ell-1} & q_{\ell-1} & r_{\ell-1} & \cdots \\ p_\ell & q_{\ell+1} & r_\ell &\cdots \\ q_\ell &r_{\ell+1} &\cdots &p_{\ell+1} \\ 
\vdots & \vdots & \vdots & \cdots \end{ytableau}$ corresponds to the initial monomial and each of the columns can be obtained by certain initial monomials. 
\item If it is $q_{\ell-1}$, then $\ytableausetup{boxsize=2em}
\begin{ytableau} p_1 & q_1  & r_1 & \cdots \\ \vdots & \vdots & \vdots & \cdots \\ 
p_{\ell-1} & q_{\ell+1} & r_{\ell-1} & \cdots \\ p_\ell & q_\ell & r_\ell &\cdots \\ q_{\ell-1} &r_{\ell+1} &\cdots &p_{\ell+1} \\ 
\vdots & \vdots & \vdots & \cdots \end{ytableau}$ corresponds to the initial monomial and each of the columns can be obtained by certain initial monomials. 
\end{itemize}

See Example~\ref{ex:t=l+1}. 
\end{proof}

\begin{Example}\label{ex:t=2} 
Let us consider the new initial monomial of $\psi(P_{1246}P_{2358}-P_{1256}P_{2348})$ with respect to $\Lambda_{(3,2,3),4}$ on $\Gr(4,8)$,  
whose tableau description is $\ytableausetup{boxsize=1em}
\begin{ytableau} 1 & 2  \\ 4 & 5 \\ 6 & 8 \\ 2 &3 \end{ytableau}-\begin{ytableau} 1 & 2  \\ 5 & 4 \\ 6 & 8 \\ 2 & 3  \end{ytableau}$. 
Then the initial monomial corresponds to $\begin{ytableau} 1 & 4  \\ 5 & 2 \\ 6 & 8 \\ 2 &3 \end{ytableau}$. 
By shifting this tableau as described in the second step of the above proof, we obtain the tableau 
$\begin{ytableau} 1 & 4  \\ 2 & 5 \\ 6 & 8 \\ 2 &3 \end{ytableau}$. 
Note that the first column is not of the form of the initial monomial.  
Hence, by shifting the $\ell$-th row, i.e., the fourth row, we obtain that $\begin{ytableau} 1 & 4  \\ 2 & 5 \\ 6 & 8 \\ 3 &2 \end{ytableau}$. 
We see that the first (resp. second) column of this tableau is of the form of the initial monomial of $\psi(P_{1236})$ (resp. $\psi(P_{2458})$). 
\end{Example}


\begin{Example}\label{ex:t=l} 
Let us consider the new initial monomial of $\psi(P_{2678}P_{4567}-P_{5678}P_{2467})$ with respect to $\Lambda_{(3,2,3),4}$ on $\Gr(4,8)$, 
whose tableau description is $\ytableausetup{boxsize=1em}
\begin{ytableau} 6 & 4  \\ 7 & 6 \\ 8 & 7 \\ 2 & 5 \end{ytableau}-\begin{ytableau} 6 & 4  \\ 7 & 6 \\ 8 & 7 \\ 5 & 2  \end{ytableau}$. 
Then the initial monomial corresponds to $\begin{ytableau} 6 & 5  \\ 7 & 6 \\ 8 & 7 \\ 2 &4 \end{ytableau}$. 
By shifting this tableau as described in the second step of the above proof, we obtain the tableau 
$\begin{ytableau} 6 & 5  \\ 7 & 6 \\ 8 & 7 \\ 4 &2 \end{ytableau}$. 
We see that the first (resp. second) column of this tableau is of the form of the initial monomial of $\psi(P_{4678})$ (resp. $\psi(P_{2567})$). 
\end{Example}

\begin{Example}\label{ex:t=l+1}
Let us consider the new initial monomial of $\psi(P_{12569}P_{34578}-P_{12568}P_{34579})$ with respect to $\Lambda_{(2,2,2,4),4}$ on $\Gr(5,10)$, 
whose tableau description is $\ytableausetup{boxsize=1em}
\begin{ytableau} 1 & 3  \\ 5 & 5 \\ 6 & 7 \\ 2 & 4 \\ 9 & 8 \end{ytableau}-\begin{ytableau} 1 & 3  \\ 5 & 5 \\ 6 & 7 \\ 2 & 4 \\ 8 &9  \end{ytableau}$. 
Then the initial monomial corresponds to $\begin{ytableau} 1 & 3  \\ 5 & 5 \\ 6 & 8 \\ 2 & 4 \\ 9 & 7 \end{ytableau}$. 
By shifting this tableau as described in the second step of the above proof, we obtain the tableau 
$\begin{ytableau} 1 & 3  \\ 5 & 5\\ 6 & 8 \\ 2 & 4\\ 7 & 9 \end{ytableau}$. 
We see that the first (resp. second) column of this tableau is of the form of the initial monomial of $\psi(P_{12567})$ (resp. $\psi(P_{34589})$). 
\end{Example}

We can prove the following: 
\begin{Lemma}\label{cor:if}
Work with the same notation and the assumption as that of Theorem~\ref{thm:if}. 
Assume that for any $2 \leq i \leq s-1$ satisfying $a_i \geq 3$, $r \geq \sum_{u=i}^sa_u$ holds. 
Then $(\ab,\ell)$-block diagonal matching field provides a SAGBI basis. 
\end{Lemma}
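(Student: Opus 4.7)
The plan is to follow the four-step strategy used in the proof of Theorem~\ref{thm:if} with only minor adjustments to accommodate middle blocks of size $\geq 3$. First, since $a_1 \leq 3 \leq \ell$, I invoke Proposition~\ref{prop:a1} (once, if $a_1 = 3$) to reduce to the case $a_1 \leq 2$. This reduction preserves the hypothesis on middle blocks: the first block is split into blocks of sizes $1$ and $a_1 - 1 \leq 2$, which do not activate the middle-block condition, while the tail sums $\sum_{u \geq i} a_u$ for the indices $i$ of the original middle blocks remain unchanged under the resulting shift.

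Next, I observe that under this reduced hypothesis, every $I \in \Ib_{r,n}$ falls into one of three forms: (a) $I$ is of type $1$ or $2$; (b) $I \subseteq I_s$, so that $T_I$ is in strictly increasing order; or (c) $q(I) = i$ for some middle index $i$ with $a_i \geq 3$, in which case $I \subseteq I_i \cup \cdots \cup I_s$ combined with $|I| = r$ and the hypothesis $r \geq \sum_{u=i}^s a_u$ forces $I = I^*_i := I_i \cup \cdots \cup I_s$, a single specific set per such $i$. The tableau $T_{I^*_i}$ is very rigid: its non-$\ell$-th entries are in strict increasing order, and its $\ell$-th entry is the last element $\alpha_i$ of $I_i$ when $a_i < \ell$ (otherwise $T_{I^*_i}$ is entirely in increasing order by Proposition~\ref{prop:bdmf}).

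I then mimic the four-step argument of Theorem~\ref{thm:if}. Given $f \in S$, either $\ini_<(\psi(f)) = \max\{\ini_<(\psi(\Pb^\alpha)) : c_\alpha \neq 0\}$ and we are done, or there are monomials $\Pb^\beta, \Pb^\gamma$ in $f$ whose initial tableaux $T, T'$ are row-wise equal, and the new initial tableau $\tilde T$ of $\psi(\Pb^\beta - \epsilon\Pb^\gamma)$ arises from a single vertical swap at the first disagreement row $t$. For $t \notin \{2, \ell, \ell+1\}$, deleting row $\ell$ leaves strictly increasing columns for all three column types (including (c)), so the analysis reduces to the diagonal matching field case exactly as in the first step of Theorem~\ref{thm:if}. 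For $t \in \{2, \ell, \ell+1\}$, I repeat the explicit rearrangement of $\tilde T$ described in the second, third, and fourth steps of Theorem~\ref{thm:if}, checking that each resulting column is the tableau $T_J$ for some $J \in \Ib_{r,n}$; columns of type (a) or (b) are treated verbatim.

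The main obstacle is the case $t \in \{2, \ell, \ell+1\}$ when a column of $\tilde T$ is supposed to come from a set of type~(c). The rigidity of $I^*_i$ (uniquely determined by $i$, with $T_{I^*_i}$ pinned to a specific shape) in fact facilitates the verification: the shifting operations of Theorem~\ref{thm:if} only redistribute entries among rows $t-1$, $t$, and $\ell$, which are compatible with the block structure of $I^*_i$. A column whose non-$\ell$-th entries form an increasing enumeration of $I^*_i \setminus \{\alpha_i\}$ with $\alpha_i$ in position $\ell$ can be immediately identified as $T_{I^*_i}$, so the identification succeeds and the decomposition into initial monomials of $\psi(P_J)$'s goes through as before.
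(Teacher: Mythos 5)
Your setup matches the paper's: reduce via Proposition~\ref{prop:a1}, and classify columns into types (a) (type $1$ or $2$), (b) ($I \subseteq I_s$), and (c) ($q(I)=i$ for a middle $i$ with $a_i \geq 3$, forcing $I = I^*_i := I_i \cup \cdots \cup I_s$ when $r = \sum_{u\geq i} a_u$, or impossible when $r > \sum_{u\geq i} a_u$). The divergence, and the gap, is in how type-(c) columns are handled. The paper's proof rests on a cancellation observation that you omit: because $I^*_i$ is the \emph{only} element of $\Ib_{r,n}$ with $q(I) = i$, any tableau $T'$ row-wise equal to a tableau $T$ containing the column $T_{I^*_i}$ must itself contain the column $T_{I^*_i}$. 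One therefore cancels all such columns from both $T$ and $T'$, reducing to tableaux with only type-(a) and type-(b) columns, and the four-step argument of Theorem~\ref{thm:if} then applies unchanged.

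Your proof instead keeps type-(c) columns in play through the shifting operations of steps 2--4 and asserts that these are ``compatible with the block structure of $I^*_i$.'' This is not established. The rearrangements in steps 2--4 move entries \emph{between columns} in rows $t-1$, $t$, and $\ell$; if a type-(c) column is one of the columns the vertical swap touches, the modified column acquires an entry from outside $I^*_i$, and nothing in your argument shows it is still of the form $T_J$ for some $J \in \Ib_{r,n}$. The case analysis in step 2 --- branching on whether $\{p_1, q_2, p'_\ell\} \subset I^{(1)}_{q(I^{(1)})}$, etc.\ --- was written under the hypothesis that all non-last blocks have size $\leq 2$ and does not obviously extend to $I^{(1)} = I^*_i$ with $|I^{(1)} \cap I_i| = a_i \geq 3$. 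Your closing sentence (a column that has the shape of $T_{I^*_i}$ can be identified as $T_{I^*_i}$) is tautological and does not address whether the rearranged columns actually attain that shape. The cancellation observation is the essential missing step, and it is precisely what lets the paper avoid re-running the case analysis for type-(c) columns.
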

\begin{proof}
For $i$ with $a_i \geq 3$, if $r>\sum_{u=i}^s a_u$, then $q(I)$ is never equals to $i$ for any $I\in\mathbf{I}_{r,n}$. If $r = \sum_{u=i}^s a_u$, then $I=I_i \cup \cdots \cup I_s$ by definition. This implies that for any tableau $T$ consisting of multiple columns one of which corresponds to $I$, any other tableau $T'$ which is row-wise equal to $T$ should contain the column corresponding to $I$. Hence, we can exclude such case. Namely, it is sufficient to consider $I$ of type $1$ or $2$. We come back to the proof of Theorem 5.1. 
\end{proof}

By using Theorems \ref{thm:onlyif}, \ref{thm:if}, Lemma \ref{cor:if}, we obtain Theorem \ref{thm:main}.

\subsection{Proof of the case $\ell=3$}
Next, we prove Theorem \ref{thm:l=3}.

\begin{proof}[Proof of Theorem \ref{thm:l=3}]
Work with the same notation as that of Theorem \ref{thm:if}.

First, we note that if $I \in \Ib_{r,n}$ is not of type $1$ or $2$ then $\Lambda_{\ab,\ell}$ sends $I$ to $\mathrm{id}$.

Since $\ell =3$, if $t \neq 2,\ell,\ell+1$, i.e., the vertical swap occurs at the rows not concerned with the first, second, and $\ell$-th rows, 
then we can deduce the discussion in the case of the diagonal matching field, i.e., we can ignore the $\ell$-th row. 

The rest of the argument is identical to the proof of the Theorem \ref{thm:if} except for the fourth step.
We modify it as follows.

\begin{itemize}
\item If it is $q_\ell$, then 
$\ytableausetup{boxsize=2em}
\begin{ytableau} p_1 & q_1  & r_1 & \cdots \\ 
p_{2} & q_{4} & r_{2} & \cdots \\ p_3 & q_{2} & r_3 &\cdots \\ q_3 &r_{4} &\cdots &p_{4} \\ 
\vdots & \vdots & \vdots & \cdots \end{ytableau}$ 
or $\begin{ytableau} p_1 & q_2  & r_1 & \cdots \\ 
p_{2} & q_{4} & r_{2} & \cdots \\ p_3 & q_{1} & r_3 &\cdots \\ q_3 &r_{4} &\cdots &p_{4} \\ 
\vdots & \vdots & \vdots & \cdots \end{ytableau}$
or $\begin{ytableau} p_1 & q_1  & r_1 & \cdots \\ 
p_{2} & q_{2} & r_{2} & \cdots \\ p_3 & q_{4} & r_3 &\cdots \\ q_3 &r_{4} &\cdots &p_{4} \\ 
\vdots & \vdots & \vdots & \cdots \end{ytableau}$
corresponds to the initial monomial and each of the columns can be obtained by certain initial monomials. We note that only the last pattern of this proof appears in the proof of Theorem \ref{thm:if}.
The reason for this is that $a_i\leq 2$ holds in the previous Theorem.

\item If it is $q_{\ell-1}$, then $\ytableausetup{boxsize=2em}
\begin{ytableau} p_1 & q_1  & r_1 & \cdots \\ 
p_{2} & q_{4} & r_{2} & \cdots \\ p_3 & q_3 & r_3 &\cdots \\ q_{2} &r_{4} &\cdots &p_{4} \\ 
\vdots & \vdots & \vdots & \cdots \end{ytableau}$ corresponds to the initial monomial and each of the columns can be obtained by certain initial monomials. 
\end{itemize}

\end{proof}

\subsection{Remaining cases}
\label{open}

What is not discussed in Theorem \ref{thm:main} is the following three cases.

\begin{itemize}
    \item[(i)] $a_1=4\text{ and } r+1\geq \sum_{u=i}^s a_u \text{ for all } i \text{ with } a_i \geq 4$;
    \item[(ii)] $a_1\leq 3 \text{ and } r+1=\sum_{u=i}^s a_u \text{ for all } i \text{ with } a_i\geq 4$; 
    \item[(iii)] $a_1\leq 4 \text{ and there exists } 2\leq i \leq s-1 \text{ such that }a_{i'} \leq 2 \text{ for } 2\leq i'< i, a_i=3 \text{ and }r+2\leq \sum_{u=i}^s a_u$, and $r+1 \geq \sum_{u=j}^s a_u$ is satisfied for any $j$ with $2 \leq j \leq s-1$ and $a_j \geq 4$.
\end{itemize}

By Proposition \ref{prop:a1}, the case (i) can be reduced to the case (iii). 
Since we assume type $<3$ in the proof of Theorem \ref{thm:if}, we cannot handle the case (iii), which is the case where the matching field does not have a type of $\geq 4$ and has type $3$. 
The case (ii) remains open.

\bibliography{Biblio}
\bibliographystyle{abbrv}


{

\end{document}